\begin{document}

\newfont{\teneufm}{eufm10}
\newfont{\seveneufm}{eufm7}
\newfont{\fiveeufm}{eufm5}
%
\newfam\eufmfam
                \textfont\eufmfam=\teneufm \scriptfont\eufmfam=\seveneufm
                \scriptscriptfont\eufmfam=\fiveeufm
%
%
\def\frak#1{{\fam\eufmfam\relax#1}}
%


\def\bbbr{{\rm I\!R}} 
\def\bbbm{{\rm I\!M}}
\def\bbbn{{\rm I\!N}} 
\def\bbbf{{\rm I\!F}}
\def\bbbh{{\rm I\!H}}
\def\bbbk{{\rm I\!K}}
\def\bbbp{{\rm I\!P}}
\def\bbbone{{\mathchoice {\rm 1\mskip-4mu l} {\rm 1\mskip-4mu l}
{\rm 1\mskip-4.5mu l} {\rm 1\mskip-5mu l}}}
\def\bbbc{{\mathchoice {\setbox0=\hbox{$\displaystyle\rm C$}\hbox{\hbox
to0pt{\kern0.4\wd0\vrule height0.9\ht0\hss}\box0}}
{\setbox0=\hbox{$\textstyle\rm C$}\hbox{\hbox
to0pt{\kern0.4\wd0\vrule height0.9\ht0\hss}\box0}}
{\setbox0=\hbox{$\scriptstyle\rm C$}\hbox{\hbox
to0pt{\kern0.4\wd0\vrule height0.9\ht0\hss}\box0}}
{\setbox0=\hbox{$\scriptscriptstyle\rm C$}\hbox{\hbox
to0pt{\kern0.4\wd0\vrule height0.9\ht0\hss}\box0}}}}
\def\bbbq{{\mathchoice {\setbox0=\hbox{$\displaystyle\rm
Q$}\hbox{\raise 0.15\ht0\hbox to0pt{\kern0.4\wd0\vrule
height0.8\ht0\hss}\box0}} {\setbox0=\hbox{$\textstyle\rm
Q$}\hbox{\raise 0.15\ht0\hbox to0pt{\kern0.4\wd0\vrule
height0.8\ht0\hss}\box0}} {\setbox0=\hbox{$\scriptstyle\rm
Q$}\hbox{\raise 0.15\ht0\hbox to0pt{\kern0.4\wd0\vrule
height0.7\ht0\hss}\box0}} {\setbox0=\hbox{$\scriptscriptstyle\rm
Q$}\hbox{\raise 0.15\ht0\hbox to0pt{\kern0.4\wd0\vrule
height0.7\ht0\hss}\box0}}}}
\def\bbbt{{\mathchoice {\setbox0=\hbox{$\displaystyle\rm
T$}\hbox{\hbox to0pt{\kern0.3\wd0\vrule height0.9\ht0\hss}\box0}}
{\setbox0=\hbox{$\textstyle\rm T$}\hbox{\hbox
to0pt{\kern0.3\wd0\vrule height0.9\ht0\hss}\box0}}
{\setbox0=\hbox{$\scriptstyle\rm T$}\hbox{\hbox
to0pt{\kern0.3\wd0\vrule height0.9\ht0\hss}\box0}}
{\setbox0=\hbox{$\scriptscriptstyle\rm T$}\hbox{\hbox
to0pt{\kern0.3\wd0\vrule height0.9\ht0\hss}\box0}}}}
\def\bbbs{{\mathchoice
{\setbox0=\hbox{$\displaystyle     \rm S$}\hbox{\raise0.5\ht0\hbox
to0pt{\kern0.35\wd0\vrule height0.45\ht0\hss}\hbox
to0pt{\kern0.55\wd0\vrule height0.5\ht0\hss}\box0}}
{\setbox0=\hbox{$\textstyle        \rm S$}\hbox{\raise0.5\ht0\hbox
to0pt{\kern0.35\wd0\vrule height0.45\ht0\hss}\hbox
to0pt{\kern0.55\wd0\vrule height0.5\ht0\hss}\box0}}
{\setbox0=\hbox{$\scriptstyle      \rm S$}\hbox{\raise0.5\ht0\hbox
to0pt{\kern0.35\wd0\vrule height0.45\ht0\hss}\raise0.05\ht0\hbox
to0pt{\kern0.5\wd0\vrule height0.45\ht0\hss}\box0}}
{\setbox0=\hbox{$\scriptscriptstyle\rm S$}\hbox{\raise0.5\ht0\hbox
to0pt{\kern0.4\wd0\vrule height0.45\ht0\hss}\raise0.05\ht0\hbox
to0pt{\kern0.55\wd0\vrule height0.45\ht0\hss}\box0}}}}
\def\bbbz{{\mathchoice {\hbox{$\sf\textstyle Z\kern-0.4em Z$}}
{\hbox{$\sf\textstyle Z\kern-0.4em Z$}} {\hbox{$\sf\scriptstyle
Z\kern-0.3em Z$}} {\hbox{$\sf\scriptscriptstyle Z\kern-0.2em
Z$}}}}
\def\ts{\thinspace}

\newtheorem{theorem}{Theorem}
\newtheorem{lemma}[theorem]{Lemma}
\newtheorem{claim}[theorem]{Claim}
\newtheorem{cor}[theorem]{Corollary}
\newtheorem{prop}[theorem]{Proposition}
\newtheorem{definition}{Definition}
\newtheorem{question}[theorem]{Open Question}

\def\squareforqed{\hbox{\rlap{$\sqcap$}$\sqcup$}}
\def\qed{\ifmmode\squareforqed\else{\unskip\nobreak\hfil
\penalty50\hskip1em\null\nobreak\hfil\squareforqed
\parfillskip=0pt\finalhyphendemerits=0\endgraf}\fi}

\def\cA{{\mathcal A}}
\def\cB{{\mathcal B}}
\def\cC{{\mathcal C}}
\def\cD{{\mathcal D}}
\def\cE{{\mathcal E}}
\def\cF{{\mathcal F}}
\def\cG{{\mathcal G}}
\def\cH{{\mathcal H}}
\def\cI{{\mathcal I}}
\def\cJ{{\mathcal J}}
\def\cK{{\mathcal K}}
\def\cL{{\mathcal L}}
\def\cM{{\mathcal M}}
\def\cN{{\mathcal N}}
\def\cO{{\mathcal O}}
\def\cP{{\mathcal P}}
\def\cQ{{\mathcal Q}}
\def\cR{{\mathcal R}}
\def\cS{{\mathcal S}}
\def\cT{{\mathcal T}}
\def\cU{{\mathcal U}}
\def\cV{{\mathcal V}}
\def\cW{{\mathcal W}}
\def\cX{{\mathcal X}}
\def\cY{{\mathcal Y}}
\def\cZ{{\mathcal Z}}

\def\fF{\mathfrak F}

\newcommand{\comm}[1]{\marginpar{%
\vskip-\baselineskip 
\raggedright\footnotesize
\itshape\hrule\smallskip#1\par\smallskip\hrule}}




\newcommand{\ignore}[1]{}

\def\vec#1{\mathbf{#1}}

\def\e{\mathbf{e}}



\def\GL{\mathrm{GL}}

\hyphenation{re-pub-lished}

\def\rank{{\mathrm{rk}\,}}
\def\dd{{\mathrm{dyndeg}\,}}

\def\A{\mathbb{A}}
\def\B{\mathbf{B}}
\def \C{\mathbb{C}}
\def \E{\mathbf{E}}
\def \F{\mathbb{F}}
\def \K{\mathbb{K}}
\def \Z{\mathbb{Z}}
\def \P{\mathbb{P}}
\def \R{\mathbb{R}}
\def \Q{\mathbb{Q}}
\def \N{\mathbb{N}}
\def \Z{\mathbb{Z}}

\def \nd{{\, | \hspace{-1.5 mm}/\,}}

\def\vh{\mathbf{h}}
\def\e{\mathbf{e}}
\def\eM{{\e}_M}
\def\ep{{\e}_p}
\def\eqk{{\e}_{m_k}}

\def\Zn{\Z_n}

\def\Fp{\F_p}
\def \fp{\Fp^*}
\def\\{\cr}
\def\({\left(}
\def\){\right)}
\def\fl#1{\left\lfloor#1\right\rfloor}
\def\rf#1{\left\lceil#1\right\rceil}
\def\mand{\qquad\mbox{and}\qquad}

\setlength{\textheight}{43pc}
\setlength{\textwidth}{28pc}

\title[Iterations of Multivariate Polynomials]
{Pseudorandom Numbers and Hash Functions
from Iterations of Multivariate Polynomials}

\author{Alina~Ostafe}
\address{Institut f\"ur Mathematik, Universit\"at Z\"urich,
Winterthurerstrasse 190 CH-8057, Z\"urich, Switzerland}
\email{alina.ostafe@math.uzh.ch}

\author{Igor E.~Shparlinski} 
\address{Department of Computing, Macquarie University\\ NSW 2109, Australia}
\email{igor@ics.mq.edu.au}

\begin{abstract}  Dynamical systems generated by iterations of multivariate polynomials with slow degree growth have 
proved to admit good  estimates of exponential sums along their orbits which in turn 
lead to rather stronger bounds on the discrepancy for pseudorandom 
vectors generated by these iterations. 
Here we add new arguments to our original approach and 
also extend some of our recent constructions and results  
to more general orbits of polynomial iterations which 
may involve distinct polynomials as well. 
Using this  construction we design a new class of hash functions from iterations of polynomials and use our estimates to motivate their ``mixing'' properties. 
\end{abstract}

\maketitle

 \paragraph{Subject Classification (2000)} 	11K45; 11T23; 11T71; 94A60

\section{Introduction}

\subsection{Background}

For a system of $m+1$ polynomials $\cF = \{f_0, \ldots ,f_{m}\}$ 
in $m+1$ variables over a
ring $\cR$ one can naturally define a  dynamical system
generated by its iterations:
$$
f_i^{(0)}=f_i, \qquad f_i^{(k)}= f_i^{(k-1)}(f_0, \ldots ,f_{m}), \qquad k=1,2 \ldots\ ,  
$$
for each $i = 0, \ldots, m$, 
see~\cite{EvWa,FomZel,Jon,Shp,Silv1,Silv2}
and references therein 
for various aspects of such dynamical systems. 
In particular, the length and the distribution 
of elements in the orbits of such 
dynamical systems, starting from an initial value 
$\(u_{0,0}, \ldots, u_{0,m}\)\in \cR^{m+1}$,
 have been of primal interest.

In the special case of one linear univariate polynomial over 
a residue ring or a finite field such iterations are 
known as {\it linear congruential generators\/}, 
which  have been successully 
used for decades in   Quasi-Monte Carlo methods, see~\cite{Nied1,Nied2}.
On the other hand, in cryptographic settings, such linear generators 
have been the subject of various attacks~\cite{ContShp,FHKLS,JS,Kraw,Lag} 
and thus   are not recommended for  cryptographic purposes. 
It should be noted that nonlinear generators have also been attacked~\cite{BGGS1,BGGS2,GoGuIb,GuIb},
but the attacks are much weaker and do not rule out their 
use for cryptographic purposes (provided reasonable precautions are 
made). 
Although linear congruential generators have been used quite sucessfully
for  Quasi-Monte Carlo methods, their linear structure shows in 
these applications too and often limits their applicability,
see~\cite{Nied1,Nied2}. 

 Motivated by these potential applications, 
the statistical uniformity of the distribution 
(measured by the discrepancy) of one and  multidimensional 
nonlinear polynomial generators 
have been intensively 
studied in~\cite{GNS,GG,NiSh1,NiSh2,NiWi,TopWin}. However, all
previously known results are nontrivial only for those 
polynomial generators
that produce sequences of extremely large period, which could
be hard to achieve in practice. The reason
behind this is that  the degree of iterated polynomial
systems grows exponentially, and that in all previous results on the general case 
the saving over the trivial bound has been logarithmic.
Moreover, it is easy to see that in the one dimensional
case (that is, for $m=0$) the exponential growth of 
the degree of iterations of a nonlinear polynomial is
unavoidable. One also expects the same behaviour 
in the mulitidimensional case for ``random'' polynomials
$f_0, \ldots ,f_{m}$. However, as it has been 
shown in~\cite{OstShp} for some specially selected
polynomials $f_0, \ldots ,f_{m}$ the degree may grow 
significantly slower, a result that leads to 
 much better estimates
of exponential sums, and thus of discrepancy, for vectors 
generated by these iterations.

Furthermore, it is shown in~\cite{Ost},
that  in the case when such a 
polynomial map generates 
a permutation of the corresponding vector space, 
one can get better results ``on average" over all initial values.
It is also noticed in~\cite{Ost} that in fact one
can avoid the use of the {\it Weil
bound\/} (see~\cite[Chapter~5]{LN}) of exponential sums 
and achieve a better result with a more elementary argument.

\subsection{Our results}

Here, as in~\cite{Ost}, we continue to study 
 the polynomial
systems of~\cite{OstShp} and exploit  the linearity with respect to 
one variable and polynomial degree growth with respect to 
the other variables. This leads to a direct improvement of the 
results of~\cite{OstShp}. 
This new approach also 
allows us to consider a slightly more general polynomial dynamical 
systems, where at each step a different polynomial
map can be used, thus extending  
those of~\cite{OstShp}. 
The argument is based 
on an elementary identity for exponential sums with linear polynomials
and also on counting zeros of multivariate polynomials in 
finite fields.
  
We remark that since 
the Weil bound is not needed anymore, one can certainly obtain 
analogues of our results for residue rings (although 
counting the number of solutions of multivariate polynomial
congruences may require more efforts than in the finite 
field settings). 

Furthermore, in~\cite{Ost,OstShp}
only the truncated vectors (consisting of $m$ components
of the total output $(m+1)$-dimensional vectors) are investigated.
Here we show that in fact the whole output vectors
can be studied, however for this we require a 
very deep result of Bourgain,  Glibichuk and 
Konyagin~\cite{BGK} (for generalisation to 
residue rings one can also use the results 
of~\cite{Bourg1,Bourg2}).

Finally, we propose a construction of a hash function from 
polynomial maps. Although we make no claims of security or 
efficiency, we note that our results show that this
hash function has ``random-like'' behaviour. 

Hash functions from walks on the set of isogenous elliptic curves
generated by low degree isogenies, and 
their cryptographic applications, are considered in~\cite{CGL,JMV}.
Alternatively these walks can be described as sequences of rational
function transformations on the coefficients of 
Weierstrass equations on elliptic curves, see~\cite{Silv0}
for a background.  
We hope that our results maybe useful for studying further properties of
such walks, for example, in showing that the hash function of~\cite{CGL,JMV}
has sufficiently  uniformly distributed 
outputs and maybe used as a secure pseudorandom number generator.  

\section{Construction} 

\subsection{Polynomial systems}

Let  $\F$ be an arbitrary field. 
As in~\cite{OstShp}, we consider a system $\cF = \{F_0, \ldots ,F_m\}$  
of $m+1$ polynomials in $\F[X_0, \ldots ,X_m]$ satisfying the following
conditions
\begin{equation}
\label{eq:Polys}
\begin{split}
F_0(X_0, \ldots ,X_m)& = X_0G_0(X_1,\ldots,X_m)+H_0(X_1,\ldots,X_m),\\
F_1(X_0, \ldots ,X_m)&=X_1G_1(X_2,\ldots,X_m)+H_1(X_2,\ldots,X_m),\\
  &\ldots  \\
F_{m-1}(X_0, \ldots ,X_m)&=X_{m-1}G_{m-1}(X_m)+H_{m-1}(X_m),\\
F_m(X_0, \ldots ,X_m) &= g_mX_m+h_m,
\end{split}
\end{equation}
where 
$$
g_m,h_m\in\F,\quad g_m \ne 0, \quad G_i,H_i\in\F[X_{i+1},\ldots,X_m],\quad i=0,\ldots,m-1.
$$ We also impose the condition that each polynomial $G_i$, $i =0, \ldots, m-1$,  has the {\it unique leading monomial\/} $X_{i+1}^{s_{i,i+1}}\ldots X_m^{s_{i,m}}$, that is,
\begin{equation}
\label{eq:Cond1}
G_i(X_{i+1},\ldots,X_m) = g_iX_{i+1}^{s_{i,i+1}}\ldots X_m^{s_{i,m}} + 
\widetilde{G_i}(X_{i+1},\ldots,X_m),  
\end{equation}
where 
\begin{equation}
\label{eq:Cond2}
g_i \in \F^*,  \qquad \deg_{X_j} \widetilde{G_i} <  s_{i,j}, 
 \qquad \deg_{X_j} H_i\le  s_{i,j},
\end{equation}
for $i=0,\ldots,m-1$, $j = i+1, \ldots, m$.

Given an integral upper triangular matrix 
\begin{equation}
\label{eq:Mat S}
S=   \(\begin{matrix}        1 & s_{0,1} &  s_{0,2} & \ldots & s_{0,m} \\
      0 & 1 & s_{1,2} & \ldots & s_{1,m} \\
      &&\ldots &&\\
      0 & 0 & 0 & \ldots & 1
   \end{matrix}\)
\end{equation}
define $\fF(S,m)$ the set of all such polynomial systems 
of the form~\eqref{eq:Polys}
satisfying the conditions~\eqref{eq:Cond1} 
and~\eqref{eq:Cond2}.

For an integer  $m\ge 1$ and an integral  matrix 
$S$  of the form~\eqref{eq:Mat S},  we consider a sequence of, 
not necessarily distinct, polynomial systems 
\begin{equation}
\label{eq:Poly Syst}
\cF_k=\{F_{k,0}, \ldots, F_{k,m}\} \in \fF(S,m),
\qquad  k=1, 2,\ldots\,.
\end{equation}
We consider the sequence of polynomials $ F_i^{(j)}$ 
defined by the recurrence relation
\begin{equation}
\label{eq:Poly Rec}
F_i^{(0)}= X_i,  
\qquad F_i^{(k)}= F_{k,i}(F_0^{(k-1)}, \ldots ,F_m^{(k-1)}), 
\qquad k=1, 2,\ldots\,.
\end{equation}
In particular, $\cF_0$ denotes the identity map. 

As in~\cite[Lemma~1]{Ost}, we have the following characterization of the polynomials $F_i^{(k)}$, which in turn generalises and 
refines~\cite[Lemma~1]{OstShp}. We note that 
unfortunately in~\cite{OstShp}
the unique leading monomial condition~\eqref{eq:Cond2} is given in the 
form  $\deg  \widetilde{G_i} <  \deg G_i$ instead of the
required $\deg_{X_j} \widetilde{G_i} <  \deg_{X_j} G_i$,
$0 \le i < j \le m$, 
that is actually used in the proof of~\cite[Lemma~1]{OstShp}. 

\begin{lemma}
\label{lem:LinTerm+Deg} Let $\cF_k \in \fF(S,m)$ be a sequence of 
 polynomial  systems~\eqref{eq:Poly Syst}. 
Then for the polynomials $F_i^{(k)}$ given by~\eqref{eq:Poly Rec} we have
$$
F_i^{(k)} = X_i \widetilde{G}_{k,i}  (X_{i+1},\ldots,X_m) + 
\widetilde{H}_{k,i} (X_{i+1},\ldots,X_m), 
$$
where $\widetilde{G}_{k,i}, \widetilde{H}_{k,i}  \in \F[X_{i+1},\ldots,X_m]$ and
\begin{eqnarray*}
\deg \widetilde{G}_{k,i} &=&\frac{1}{(m-i)!}k^{m-i}s_{i,i+1}\ldots s_{m-1,m}+\psi_i(k),\quad  0 \le i \le m-1,\\
\deg \widetilde{G}_{k,m}&=& 0,
\end{eqnarray*}
with some  polynomials $\psi_i(T) \in \Q[T]$  of degree  $\deg \psi_i <m-i$.
\end{lemma}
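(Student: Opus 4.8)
The plan is to prove both the linear-in-$X_i$ structural form and the degree formula simultaneously by induction on $k$, and within each step by downward induction on $i$ (from $i=m$ down to $i=0$), since $F_i^{(k)}$ depends on $F_j^{(k-1)}$ only for $j\ge i$ via the shape of the systems in $\fF(S,m)$. First I would record the base case: for $k=1$ we have $F_i^{(1)}=F_{1,i}$, which by~\eqref{eq:Polys} and~\eqref{eq:Cond1} has exactly the asserted form with $\widetilde G_{1,i}=G_{1,i}$, so $\deg\widetilde G_{1,i}=s_{i,i+1}+\cdots+s_{i,m}$; one checks this matches $\frac{1}{(m-i)!}k^{m-i}s_{i,i+1}\cdots s_{m-1,m}+\psi_i(k)$ at $k=1$ for a suitable $\psi_i$ of degree $<m-i$ (the leading term in $k$ is the only constraint, the rest is absorbed into $\psi_i$). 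For $i=m$ the claim $\deg\widetilde G_{k,m}=0$ is immediate since $F_m^{(k)}=g_mX_m+h_m$ iterated stays affine in $X_m$ alone.

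For the inductive step, assume the statement holds for $k-1$. Substituting into~\eqref{eq:Poly Rec}, $F_i^{(k)}=F_{k,i}\bigl(F_0^{(k-1)},\ldots,F_m^{(k-1)}\bigr)$, and using that $F_{k,i}(X_0,\ldots,X_m)=X_iG_{k,i}(X_{i+1},\ldots,X_m)+H_{k,i}(X_{i+1},\ldots,X_m)$, we get
\[
F_i^{(k)} = F_i^{(k-1)}\cdot G_{k,i}\bigl(F_{i+1}^{(k-1)},\ldots,F_m^{(k-1)}\bigr) + H_{k,i}\bigl(F_{i+1}^{(k-1)},\ldots,F_m^{(k-1)}\bigr).
\]
By the inductive hypothesis $F_i^{(k-1)}=X_i\widetilde G_{k-1,i}+\widetilde H_{k-1,i}$ with $\widetilde G_{k-1,i},\widetilde H_{k-1,i}\in\F[X_{i+1},\ldots,X_m]$, and each $F_j^{(k-1)}$ for $j\ge i+1$ lies in $\F[X_{i+1},\ldots,X_m]$; hence $F_i^{(k)}$ is again linear in $X_i$ with coefficients in $\F[X_{i+1},\ldots,X_m]$, giving
\[
\widetilde G_{k,i} = \widetilde G_{k-1,i}\cdot G_{k,i}\bigl(F_{i+1}^{(k-1)},\ldots,F_m^{(k-1)}\bigr).
\]
So the structural claim follows formally; everything rests on tracking $\deg\widetilde G_{k,i}$.

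For the degree, I would argue that $\deg\widetilde G_{k,i}=\deg\widetilde G_{k-1,i}+\deg\bigl(G_{k,i}(F_{i+1}^{(k-1)},\ldots,F_m^{(k-1)})\bigr)$ — the sum of degrees holds because there is no cancellation in a product of nonzero polynomials over a field. The unique-leading-monomial condition~\eqref{eq:Cond1}--\eqref{eq:Cond2} is what controls the second term: the leading monomial $g_{k,i}X_{i+1}^{s_{i,i+1}}\cdots X_m^{s_{i,m}}$ of $G_{k,i}$, when composed with the $F_j^{(k-1)}$, has degree $\sum_{j=i+1}^{m}s_{i,j}\deg F_j^{(k-1)}$, and one must check this dominates the contribution of $\widetilde{G_{k,i}}$ composed with the same maps — this is exactly the role of $\deg_{X_j}\widetilde{G_i}<s_{i,j}$, ensuring the top-degree term cannot be killed. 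Here $\deg F_j^{(k-1)}=\deg\widetilde G_{k-1,j}+1$ for $j\le m-1$ and $\deg F_m^{(k-1)}=1$. Substituting the inductive degree formula $\deg\widetilde G_{k-1,j}=\frac{1}{(m-j)!}(k-1)^{m-j}s_{j,j+1}\cdots s_{m-1,m}+\psi_j(k-1)$ yields a recurrence of the form
\[
\deg\widetilde G_{k,i} = \deg\widetilde G_{k-1,i} + \sum_{j=i+1}^{m}s_{i,j}\bigl(\deg\widetilde G_{k-1,j}+1\bigr),
\]
and the dominant term on the right is $s_{i,i+1}\deg\widetilde G_{k-1,i+1}=s_{i,i+1}\cdot\frac{1}{(m-i-1)!}(k-1)^{m-i-1}s_{i+1,i+2}\cdots s_{m-1,m}+\cdots$. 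Summing this telescoping recurrence from the base value (i.e., $\deg\widetilde G_{k,i}=\sum_{\ell=1}^{k}\bigl[\text{increment at step }\ell\bigr]$, starting from $\deg\widetilde G_{1,i}$) and using $\sum_{\ell=1}^{k}\ell^{m-i-1}=\frac{1}{m-i}k^{m-i}+O(k^{m-i-1})$ produces the leading term $\frac{1}{(m-i)!}k^{m-i}s_{i,i+1}\cdots s_{m-1,m}$, with all lower-order-in-$k$ contributions (including those from $\widetilde G_{k-1,j}$ for $j>i+1$, from the $\psi_j$, and from the $+1$ shifts) collected into a polynomial $\psi_i(k)\in\Q[T]$ of degree $<m-i$. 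That these corrections are genuinely polynomial in $k$ of bounded degree follows because sums of the form $\sum_{\ell\le k}\ell^d$ are Bernoulli polynomials in $k$.

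The main obstacle is the bookkeeping in the degree recurrence: one must verify carefully that the unique-leading-monomial hypothesis really does prevent cancellation when $G_{k,i}$ is composed with the $F_j^{(k-1)}$ (so that $\deg G_{k,i}(F_{i+1}^{(k-1)},\ldots)=\sum_j s_{i,j}\deg F_j^{(k-1)}$ exactly, not just $\le$), and then that the nested downward induction on $i$ feeds the correct leading terms upward so that the closed form with the factorial and the product $s_{i,i+1}\cdots s_{m-1,m}$ emerges. Everything else — linearity in $X_i$, membership in $\F[X_{i+1},\ldots,X_m]$, the polynomiality of $\psi_i$ — is formal once this degree accounting is in place. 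This is essentially the computation behind~\cite[Lemma~1]{Ost} and~\cite[Lemma~1]{OstShp}, now carried out with possibly distinct systems $\cF_k$ at each step, which changes nothing structurally since all $\cF_k$ share the same exponent matrix $S$.
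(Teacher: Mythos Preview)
Your proposal is correct and follows essentially the same route as the paper: an induction on $k$ gives the linear-in-$X_i$ structure via the identity $F_i^{(k)}=F_i^{(k-1)}G_{k,i}(F_{i+1}^{(k-1)},\ldots,F_m^{(k-1)})+H_{k,i}(\ldots)$, and the degree formula comes from the same recurrence on $d_{k,i}=1+\deg\widetilde G_{k,i}$. The only presentational difference is that the paper packages your recurrence as the matrix identity $(d_{k,0},\ldots,d_{k,m})^t=S^k(1,\ldots,1)^t$ and cites~\cite[Lemma~1]{OstShp} for the polynomial-in-$k$ conclusion (which follows at once from $S$ being unipotent upper triangular, so $S^k=\sum_{j=0}^m\binom{k}{j}(S-I)^j$), whereas you unroll the same recurrence componentwise and resolve it with Faulhaber/Bernoulli sums; both computations are equivalent.
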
 

\begin{proof}
Writing $F_{k,i} = X_i G_{k,i} + H_{k,i}$
we get
$$
F_i^{(k)}  = F_i^{(k-1)} G_{k,i}\(F_{i+1}^{(k-1)}, \ldots, F_{m}^{(k-1)}\)
+H_{k,i}\(F_{i+1}^{(k-1)}, \ldots, F_{m}^{(k-1)}\).
$$
Thus an easy inductive argument implies that 
$$
F_i^{(k)} = X_i \widetilde{G}_{k,i}  (X_{i+1},\ldots,X_m) + 
\widetilde{H}_{k,i} (X_{i+1},\ldots,X_m)
$$
for some polynomials $\widetilde{G}_{k,i}, \widetilde{H}_{k,i}  \in \F[X_{i+1},\ldots,X_m]$, 
where  $i =0, \ldots, m$,  $k=1, 2,\ldots$. 

For the asymptotic formulas for the degrees of the polynomials 
$\widetilde{G}_{k,i}$ 
see~\cite[Lemma~1]{OstShp} where it is given 
for  $\deg F_i^{(k)}$. 
We note that 
in~\cite{OstShp} only the case when at each step 
the same polynomial system $\cF_k = \cF$ is applied but the 
proof holds for distinct systems $\cF_k \in \fF(S,m)$ 
without any changes.  
Indeed, 
let 
$$
d_{k,i} = \deg (X_i \widetilde{G}_{k,i}) = 1 + \deg \widetilde{G}_{k,i}, 
\qquad i =0, \ldots, m,\ k =1,2 \ldots.
$$
Then the result follows 
immediately 
from the recursive formula
$$
(d_{k,0}, \ldots, d_{k,m})^t = S^{k}  (1, \ldots, 1) 
$$
implied by~\eqref{eq:Cond1} and \eqref{eq:Cond2},
where 
$$S=   \(\begin{matrix}        1 & s_{0,1} &  s_{0,2} & \ldots & s_{0,m} \\
      0 & 1 & s_{1,2} & \ldots & s_{1,m} \\
      &&\ldots &&\\
      0 & 0 & 0 & \ldots & 1
   \end{matrix}\),
 $$
and $\vec{d}^T$ means the transposition 
of the vector $\vec{d}$, 
see the proof of~\cite[Lemma~1]{OstShp} for more details.
\end{proof}

\subsection{Vector sequences}

Given a sequence of polynomial systems~\eqref{eq:Poly Syst}, 
we fix a vector $\vec{v} \in \F_p^{m+1}$ and
consider the sequence defined by a recurrence congruence
modulo a prime $p$ of the form
\begin{equation}
\label{eq:Gen}
u_{n+1,i}\equiv F_{n+1,i}(u_{n,0},\ldots,u_{n,m})\!\!\! \pmod p, \qquad n = 0,1,\ldots,
\end{equation}
with some {\it initial values\/}
$$(u_{0,0},\ldots,u_{0,m}) = \vec{v}.
$$ 
We also assume that $0 \le u_{n,i} < p$, $i=0,\ldots,m$, $n=0, 1, \ldots$.

Using the following vector notation
$$
\vec{w}_n=(u_{n,0},\ldots,u_{n,m})
$$
we have the recurrence relation
$$
\vec{w}_{n}=\cF_{n}(\vec{w}_{n-1}),   \qquad n =1,2, \ldots\,.
$$
In particular, for any $n,k\ge 0$ and $i=0,\ldots,m$ we have
$$
u_{n+k,i}=F_i^{(k)}(u_{n,0},\ldots,u_{n,m}), 
$$ 
where the polynomials $F_i^{(k)}$, 
$i=0,\ldots,m$, $k =1,2, \ldots$, are 
given by~\eqref{eq:Poly Rec}. 
Clearly the sequence of vectors $\vec{w}_{n}$ is eventually periodic
with some period $\tau \le p^{m+1}$.
We always  assume that the sequence is purely 
periodic, that is, 
$$
\vec{w}_{n+\tau} = \vec{w}_{n}, \qquad n =0,1, \ldots\,.
$$
As in~\cite{Ost,OstShp}, we sometimes discard the last component 
and define the truncated vectors 
$$
\vec{u}_n=(u_{n,0},\ldots, u_{n,m-1})
$$
However, here we introduce a new argument which allows us sometimes
to study full vectors $\vec{w}_n$.

\section{Exponential Sums and Discrepancy}
\label{sec:Distr}

\subsection{Preliminaries}

Assume that  the sequence $\{\vec{u}_n\}$ generated by~\eqref{eq:Gen} is 
purely periodic
with an arbitrary period $\tau$.
For integer vectors $\vec{a} = (a_0, \ldots, a_{m-1}) \in \Z^{m}$
and $\vec{b} = (b_0, \ldots, b_{m}) \in \Z^{m+1}$ we introduce
the exponential sums
$$
S_{\vec{a}}(N) =  \sum_{n=0}^{N-1} \ep\(\sum_{i=0}^{m-1} a_iu_{n,i}\)
\quad \text{and}\quad 
T_{\vec{b}}(N) =  \sum_{n=0}^{N-1} \ep\(\sum_{i=0}^{m} b_iu_{n,i}\), 
$$
where
$$
\ep(z) = \exp(2 \pi i z/p).
$$
Clearly, if $\vec{b} = (a_0, \ldots, a_{m-1}, 0)$ then we
simply have $S_{\vec{a}}(N) = T_{\vec{b}}(N)$, thus the sums
$T_{\vec{b}}(N)$ are direct generalisations of 
the sums  $S_{\vec{a}}(N)$ that have been treated 
in~\cite{Ost,OstShp}. Here we show that together 
with some additional arguments, one can obtain similar results
for the sums $T_{\vec{b}}(N)$.

Bounds of these sums can be used to estimate the discrepancy 
of the corresponding sequences, which is a widely accepted 
quantitative measure 
of uniformity of distribution of sequences, and thus good pseudorandom
sequences should (after an appropriate scaling) have a small discrepancy,
see~\cite{Nied1,Nied2}.

Given a sequence $\Gamma$ of $N$ points 
\begin{equation}
\label{eq:GenSequence}
\Gamma = \left\{(\gamma_{n,1}, \ldots, \gamma_{n,s})_{n=0}^{N-1}\right\}
\end{equation}
in the $s$-dimensional unit cube $[0,1)^s$
it is natural to measure the level of its statistical uniformity 
in terms of the {\it discrepancy\/} $\Delta(\Gamma)$. 
More precisely, 
$$
\Delta(\Gamma) = \sup_{B \subseteq [0,1)^s}
\left|\frac{T_\Gamma(B)} {N} - |B|\right|,
$$
where $T_\Gamma(B)$ is the number of points of  $\Gamma$
inside the box
$$
B = [\alpha_1, \beta_1) \times \ldots \times [\alpha_{s}, \beta_{s})
\subseteq [0,1)^s
$$
and the supremum is taken over all such boxes, see~\cite{DrTi,KuNi}.

Typically the bounds on the discrepancy of a 
sequence  are derived from bounds of exponential sums
with elements of this sequence. 
The relation is made explicit in 
 the celebrated {\it Erd\H os-Turan-Koksma
inequality\/}, see~\cite[Theorem~1.21]{DrTi},
which we  present in the following form.

\begin{lemma}
\label{lem:Kok-Szu} There exists a constant $C_s$ depending only 
on $s$ such that for any
integer $H > 1$ and any  sequence $\Gamma$ of $N$ points~\eqref{eq:GenSequence}
the discrepancy $\Delta(\Gamma)$
satisfies the following bound:
$$
\Delta(\Gamma) \le C_s\( \frac{1}{H}
+ \frac{1}{N}\sum_{ 0 < |\vec{h}| \le H}  
\prod_{j=1}^s \frac{1}{ |h_j| + 1}
\left| \sum_{n=0}^{N-1} \exp \( 2 \pi i\sum_{j=1}^{s}h_j\gamma_{n,j} \)
\right| \)
$$
where the sum is taken over all integers vectors
$\vec{h} = (h_1, \ldots, h_s) \in \Z^s$
with $|\vec{h}| = \max_{j = 1, \ldots, s} |h_j| < H$.
\end{lemma}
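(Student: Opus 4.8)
The plan is to follow the classical route underlying~\cite[Theorem~1.21]{DrTi}: sandwich the indicator function of a box between trigonometric polynomials of controlled degree, mean value and Fourier coefficients, and then feed the approximation into the definition of $\Delta(\Gamma)$.

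First I would recall the one--dimensional input. For every interval $[\alpha,\beta)\subseteq[0,1)$ and every integer $H\ge 1$ there exist $1$--periodic trigonometric polynomials $A^{-},A^{+}$ of degree at most $H$ with
$$
A^{-}(x)\le \mathbf{1}_{[\alpha,\beta)}(x)\le A^{+}(x), \qquad x\in\R,
$$
whose constant terms equal $(\beta-\alpha)\pm 1/(H+1)$ and whose non--zero Fourier coefficients $a^{\pm}(h)$ satisfy $|a^{\pm}(h)|\le c/|h|$ for $1\le|h|\le H$, with an absolute constant $c$, and vanish for $|h|>H$. Such $A^{\pm}$ can be built from the Fej\'er kernel, equivalently from Vaaler's majorant and minorant of the sawtooth function; this is the only step where genuine harmonic analysis enters.

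Second, I would tensor the one--dimensional approximants. For the upper estimate of the counting function take the majorant $A^{+}_j$ in each coordinate $j$ and set $P^{+}(x_1,\ldots,x_s)=\prod_{j=1}^{s}A^{+}_j(x_j)$; similarly let $P^{-}$ be the product of the minorants. As all one--dimensional factors are nonnegative on $[0,1)$, one gets $P^{-}\le \mathbf{1}_{B}\le P^{+}$ on $[0,1)^s$. Expanding the products, $P^{\pm}$ is a trigonometric polynomial all of whose frequencies $\vec h=(h_1,\ldots,h_s)$ obey $|h_j|\le H$, whose constant term is $\prod_{j=1}^{s}\left((\beta_j-\alpha_j)\pm\tfrac{1}{H+1}\right)=|B|+O(1/H)$ (with an implied constant depending only on $s$), and whose remaining Fourier coefficients are bounded by $C_s\prod_{j=1}^{s}(|h_j|+1)^{-1}$ for a suitable $C_s$; here $C_s$ absorbs the combinatorial blow--up of the $s$--fold expansion, including the mixed terms in which some of the $h_j$ vanish.

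Finally, evaluating at the points of $\Gamma$ and averaging over $n$ gives
$$
\frac1N\sum_{n=0}^{N-1}P^{-}(\gamma_{n,1},\ldots,\gamma_{n,s})\ \le\ \frac{T_\Gamma(B)}{N}\ \le\ \frac1N\sum_{n=0}^{N-1}P^{+}(\gamma_{n,1},\ldots,\gamma_{n,s}).
$$
Replacing $P^{\pm}$ by their Fourier expansions, the constant term contributes $|B|+O(1/H)$ and each frequency $\vec h\ne\vec 0$ contributes at most $C_s\prod_{j=1}^{s}(|h_j|+1)^{-1}$ times $\bigl|\sum_{n=0}^{N-1}\exp\bigl(2\pi i\sum_{j=1}^{s}h_j\gamma_{n,j}\bigr)\bigr|/N$. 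Subtracting $|B|$ and taking the supremum over all boxes $B\subseteq[0,1)^s$ yields the stated inequality. The main obstacle is precisely the one--dimensional majorant/minorant construction in the first step; once it is available, everything else is bookkeeping with the triangle inequality, and the only role of $C_s$ is to collect the constants produced by the tensor--product expansion.
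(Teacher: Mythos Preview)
The paper does not prove this lemma at all; it merely records the Erd\H{o}s--Tur\'an--Koksma inequality as a known tool and cites~\cite[Theorem~1.21]{DrTi}. Your sketch is essentially the classical argument behind that reference, so in spirit you are aligned with the source the paper invokes.

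That said, there is a genuine slip in your tensoring step. You write ``as all one-dimensional factors are nonnegative on $[0,1)$'' and conclude $P^{-}\le\mathbf{1}_B$. The majorants $A_j^{+}$ are indeed nonnegative (they dominate $\mathbf{1}_{[\alpha_j,\beta_j)}\ge 0$), so $P^{+}\ge\mathbf{1}_B$ is fine. But the minorants $A_j^{-}$ coming from the Vaaler/Beurling--Selberg construction are \emph{not} nonnegative in general: outside $[\alpha_j,\beta_j)$ they typically dip below zero, and must do so whenever $\beta_j-\alpha_j<1/(H+1)$, since you yourself state that their mean is $(\beta_j-\alpha_j)-1/(H+1)$. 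Consequently a product of two or more such minorants can be strictly positive at a point outside $B$, and $\prod_j A_j^{-}\le\mathbf{1}_B$ fails. The standard remedies are either (i) to build $A_j^{\pm}$ as convolutions of indicators of slightly enlarged/shrunk intervals with the Fej\'er kernel, which gives $0\le A_j^{-}\le\mathbf{1}_{[\alpha_j,\beta_j)}\le A_j^{+}$ and then your product argument goes through verbatim, or (ii) to handle the lower bound separately, for instance by applying the upper-bound argument to the complement via a telescoping over the coordinates. With either fix the remainder of your outline is correct bookkeeping.
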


We always assume that  a finite field $\F_p$ of $p$ elements
is represented by  the set $\{0, 1, \ldots, p-1 \}$.
So for $u \in \F_p$ we always have $u/p \in [0,1)$ and
thus we can talk about the  discrepancy of vectors
over $\F_p$ after scaling them by $1/p$.

Throughout the paper, the implied constants in the 
symbols `$O$' and `$\ll$'
may occasionally, where obvious, depend on the matrix $S$
and the integer $m\ge 1$  (and are absolute otherwise). We recall that the notations $A = O(B )$ and $A\ll B$ are all equivalent to the assertion that the inequality $|A|\leq cB$ holds for some
constant $c > 0$.

\subsection{Arbitrary Systems}

Here we assume, exactly  as in~\cite{OstShp}, that 
all polynomial systems~\eqref{eq:Poly Syst} are the same,
that is $\cF_k = \cF$.  Our next results 
are a  direct   improvement 
of the estimate of~\cite[Theorem~4]{OstShp} for 
the sums $S_{\vec{a}}(N) $ 
and also an extension of such bound to more general sums $T_{\vec{b}}(N)$.

We need the following 
generalisation of the bound on exponential sums of~\cite[Lemma~2]{Ost}, 
which avoids using the Weil bound (see~\cite[Chapter~5]{LN})
and which is our main tool in improving the result of~\cite{OstShp}. 

\begin{lemma}
\label{lem:Elem}
Let $\cF \in \fF(S,m)$ with $s_{0,1} \ldots s_{m-1,m} \ne 0$,  then there is a positive integer $k_0$ depending only on  $S$ and $m$ such that for any integer vectors 
$$
\vec{k} = (k_1, \ldots, k_\nu), \quad \vec{l} = (l_1, \ldots, l_\nu), \quad 
\min \{k_1, \ldots, k_\nu, l_1, \ldots, l_\nu\} \ge k_0
$$
with  components that  are not permutations of each other
and  integer vector $\vec{a} = (a_0, \ldots, a_{m-1})$
with
$$
\gcd (a_0, \ldots, a_{m-1}, p) = 1,
$$
for the polynomial
$$
F_{\vec{a},\vec{k},\vec{l}} =   \sum_{i=0}^{m-1} \!a_i
\sum_{h=1}^\nu \(F_i^{(k_h)}-F_i^{(l_h)}\)
$$
where  the polynomials $F_i^{(k)}$ are given by~\eqref{eq:Poly Rec}, we have
$$
 \sum_{w_0, \ldots, w_m =1 }^p
\ep\( F_{\vec{a},\vec{k},\vec{l}}(w_{0},\ldots,w_{m})\)   \ll K^m p^{m}, 
$$
where 
$$
K = \max \{k_1, \ldots, k_\nu, l_1, \ldots, l_\nu\} .
$$
\end{lemma}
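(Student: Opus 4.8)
The plan is to exploit, as in the proof of~\cite[Lemma~2]{Ost}, the linearity of the iterates $F_i^{(k)}$ in their leading variable, together with the degree information supplied by Lemma~\ref{lem:LinTerm+Deg} and the elementary bound on the number of $\F_p$-zeros of a nonzero multivariate polynomial. First I would let $j$ be the smallest index with $p\nmid a_j$; it exists and satisfies $0\le j\le m-1$ since $\gcd(a_0,\dots,a_{m-1},p)=1$. Because $a_i\equiv 0\pmod p$ for $i<j$, the corresponding terms of $F_{\vec a,\vec k,\vec l}$ vanish modulo $p$, so we may work modulo $p$ with $\sum_{i=j}^{m-1}a_i\sum_{h=1}^\nu\(F_i^{(k_h)}-F_i^{(l_h)}\)$. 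By Lemma~\ref{lem:LinTerm+Deg} every $F_i^{(k)}$ with $i\ge j$ involves only $X_j,\dots,X_m$, is linear in $X_j$ when $i=j$, and is free of $X_j$ when $i>j$; hence this polynomial has the shape $X_jA_j(X_{j+1},\dots,X_m)+B_j(X_{j+1},\dots,X_m)$ with
$$
A_j=a_j\sum_{h=1}^\nu\(\widetilde{G}_{k_h,j}-\widetilde{G}_{l_h,j}\).
$$
Summing over $w_0,\dots,w_{j-1}$ contributes a factor $p^j$, and the sum over $w_j$ is a complete linear exponential sum, contributing $p$ precisely when $A_j$ vanishes at $(w_{j+1},\dots,w_m)$ and $0$ otherwise; so the sum in the statement is at most $p^{j+1}N_p(A_j)$ in absolute value, where $N_p(A_j)$ counts the zeros of $A_j$ in $\F_p^{m-j}$.

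The crux, and the step I expect to be the main obstacle, is to show $A_j\not\equiv 0\pmod p$. Since $p\nmid a_j$ it suffices that $D:=\sum_v\(\mu_k(v)-\mu_l(v)\)\widetilde{G}_{v,j}$ be nonzero modulo $p$, where $\mu_k(v)$ and $\mu_l(v)$ are the multiplicities of the value $v$ in $\vec k$ and in $\vec l$. Let $K^{*}$ be the largest $v$ with $\mu_k(v)\ne\mu_l(v)$; it exists exactly because $\vec k$ and $\vec l$ are not permutations of each other, and the terms of $D$ with $v>K^{*}$ cancel. By Lemma~\ref{lem:LinTerm+Deg} one has $\deg\widetilde{G}_{v,j}=\tfrac1{(m-j)!}v^{m-j}s_{j,j+1}\cdots s_{m-1,m}+\psi_j(v)$ with $\deg\psi_j<m-j$ and positive leading coefficient, so there is a single $k_0=k_0(S,m)$ for which $v\mapsto\deg\widetilde{G}_{v,j}$ is strictly increasing on $\{v\ge k_0\}$ for every $j\in\{0,\dots,m-1\}$; since all $k_h,l_h\ge k_0$, every $v<K^{*}$ occurring in $D$ satisfies $\deg\widetilde{G}_{v,j}<\deg\widetilde{G}_{K^{*},j}$. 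Hence the homogeneous component of $D$ of degree $\deg\widetilde{G}_{K^{*},j}$ is $\(\mu_k(K^{*})-\mu_l(K^{*})\)$ times the (nonzero) top-degree part of $\widetilde{G}_{K^{*},j}$; assuming $\nu<p$ (which holds in all intended applications, where $\nu$ is a fixed integer), the integer $\mu_k(K^{*})-\mu_l(K^{*})$ is nonzero of absolute value at most $\nu$ and hence a unit modulo $p$, so $D\not\equiv 0$ and therefore $A_j\not\equiv 0\pmod p$.

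It remains to bound $N_p(A_j)$: since $A_j$ is a nonzero polynomial in $m-j$ variables over $\F_p$, the standard estimate for the number of zeros of a multivariate polynomial gives $N_p(A_j)\le(\deg A_j)\,p^{m-j-1}$, while $\deg A_j\le\max_h\{\deg\widetilde{G}_{k_h,j},\deg\widetilde{G}_{l_h,j}\}\ll K^{m-j}$ by Lemma~\ref{lem:LinTerm+Deg}. Combining with the first paragraph,
$$
\left|\sum_{w_0,\dots,w_m=1}^p\ep\(F_{\vec a,\vec k,\vec l}(w_0,\dots,w_m)\)\right|\le p^{j+1}N_p(A_j)\ll p^{j+1}K^{m-j}p^{m-j-1}=K^{m-j}p^m\le K^mp^m ,
$$
since $K\ge k_0\ge 1$. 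The genuinely delicate point is thus the non-vanishing of $A_j$ in the middle step: it is there that one needs both the precise degree asymptotics of Lemma~\ref{lem:LinTerm+Deg}, used through the choice of $k_0$ to keep the leading terms of the $\widetilde{G}_{v,j}$ strictly ordered by $v$ so that they cannot cancel against lower-order contributions, and the hypothesis that $\vec k$ and $\vec l$ are not permutations of one another, which provides a surviving top term $\widetilde{G}_{K^{*},j}$.
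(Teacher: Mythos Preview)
Your proof is correct and follows essentially the same route as the paper's own argument: isolate the smallest index $j$ with $p\nmid a_j$, use Lemma~\ref{lem:LinTerm+Deg} to expose the linearity in $X_j$, reduce via~\eqref{eq:lin sum} to counting zeros of $\sum_h(\widetilde{G}_{k_h,j}-\widetilde{G}_{l_h,j})$, and finish with the elementary $O(Dp^{r-1})$ bound. Your treatment of the non-vanishing step is in fact more careful than the paper's---where the paper simply says ``performing all trivial cancellations'' and invokes Lemma~\ref{lem:LinTerm+Deg}, you track multiplicities and make explicit the (harmless) side condition $\nu<p$ needed for the surviving integer coefficient to be a unit---but the underlying idea is the same.
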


\begin{proof}  Let $s < m-1$ be the smallest integer such that $a_s\ne 0$.
By Lemma~\ref{lem:LinTerm+Deg} we have
\begin{equation*}
\begin{split}
F_{\vec{a}, \vec{k},\vec{l}}&(x_{0},\ldots,x_{m})  \\
 &  = \sum_{i=s}^{m-1} \!a_i x_i
\sum_{h=1}^\nu
\(\widetilde{G}_{k_h,i}(x_{i+1}, \ldots, x_m)-\widetilde{G}_{l_h,i}(x_{i+1}, \ldots, x_m)\)\\
& \qquad\qquad + \sum_{i=s}^{m-1} \!a_i  
\sum_{h=1}^\nu \(\widetilde{H}_{k_h,i}(x_{i+1}, \ldots, x_m) -\widetilde{H}_{l_h,i}(x_{i+1}, \ldots, x_m)\)\\
 &  = a_s x_s \sum_{h=1}^\nu\(\widetilde{G}_{k_h,s}(x_{s+1}, \ldots, x_m)-\widetilde{G}_{l_h,s}(x_{s+1}, \ldots, x_m)\) \\
& \qquad\qquad\qquad\qquad\qquad\qquad \qquad\qquad
+  \Psi_{\vec{a}, \vec{k},\vec{l}}(x_{s+1},\ldots,x_{m}) 
\end{split}
\end{equation*}
for a certain polynomial $ \Psi_{\vec{a}, \vec{k},\vec{l}}(x_{s+1},\ldots,x_{m}) \in \F_p[ x_{s+1},\ldots,x_{m}]$. 

Therefore, 
\begin{equation*}
\begin{split}
 &\sum_{x_0, \ldots, x_m =1 }^p   
\ep\( F_{\vec{a}, \vec{k},\vec{l}}(x_{0},\ldots,x_{m})\) \\
 & \quad   =p^{s} \sum_{x_{s+1}, \ldots, x_{m} =1 }^p
\ep\( \Psi_{\vec{a}, \vec{k},\vec{l}}(x_{s+1},\ldots,x_{m}) \)\\
& \qquad  \sum_{x_s=1}^p  \ep\( a_s x_s \sum_{h=1}^\nu\(\widetilde{G}_{k_h,s}(x_{s+1}, \ldots, x_m)-\widetilde{G}_{l_h,s}(x_{s+1}, \ldots, x_m)\)\).
\end{split}
\end{equation*}

Recalling the identity 
\begin{eqnarray}
\label{eq:lin sum}
  \sum_{u=1}^p \ep(c u) = \left\{\begin{array}{ll}
p,&\quad\text{if $c\equiv 0 \pmod p$,}\\
0,&\quad\text{if $c\not\equiv 0 \pmod p$,}
\end{array}
\right.
\end{eqnarray}
see~\cite[Equation~(5.9)]{LN1}, 
we conclude that  the sum over the variable $x_s$ is nonzero only 
if the polynomial 
$$
\Phi_{s,\vec{k},\vec{l}}= \sum_{h=1}^\nu(\widetilde{G}_{k_h,s}-\widetilde{G}_{l_h,s})
\in \F_p[X_{s+1},\ldots,X_m]
$$ 
is zero modulo $p$ at $(x_{s+1}, \ldots, x_m)$.  

Performing all trivial cancelations, without loss of generality we can also assume that the vectors $\vec{k}$ and $\vec{l}$ have no common elements. Thus, 
by Lemma~\ref{lem:LinTerm+Deg}, we see that if $\min \{k_1, \ldots, k_\nu, l_1, \ldots, l_\nu\} \ge k_0$
for a sufficiently large $k_0$ then  the polynomial  
$\Phi_{s,\vec{k},\vec{l}}$
is a nontrivial polynomial modulo $p$ of degree $O(K^{m-s}) = O(K^m)$. 
Also, a simple inductive argument shows that a modulo $p$ nontrivial polynomial 
in $r$ variables of degree $D$ may have only $O(Dp^{r-1})$
zeros modulo $p$, which concludes the proof. 
\end{proof}

\begin{theorem}
\label{thm:ExpSum S} 
Let  the sequence $\{\vec{u}_n\}$ be given by~\eqref{eq:Gen}
for     $\cF_k = \cF$, $k=1,2, \ldots$, with a 
polynomial 
system $\cF \in  \fF(S,m)$ of the form~\eqref{eq:Polys}
of total degree $d \ge 2$  
and such that $s_{0,1} \ldots s_{m-1,m} \ne 0$.
Assume that $\{\vec{w}_n\}$ is purely
periodic with period $\tau$.  Then for any fixed integer $\nu\ge 1$, 
 positive integer $N \le \tau$ and   nonzero vector $\vec{a}  \in \F_p^m$ 
the bound
$$
S_{\vec{a}}(N)    \ll N^{1-\beta_{m,\nu}}p^{\alpha_{m,\nu}}
$$
holds, where
$$
\alpha_{m,\nu}=\frac{m^2 + m\nu + m}{2\nu(m+\nu)} \qquad \text{and} 
\qquad \beta_{m,\nu} = \frac{1}{2\nu}
$$
and the implied constant depends  only on $d$, $m$
and $\nu$.
\end{theorem}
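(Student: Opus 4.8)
The plan is to run a ``shift and average'' (Weyl differencing) argument: after raising $S_{\vec a}(N)$ to the power $2\nu$ by H\"older's inequality and completing the orbit sum to a full sum over $\F_p^{m+1}$, I would reduce the bound to the complete exponential sums already estimated in Lemma~\ref{lem:Elem}.

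First I would fix a parameter $W\ge1$ and set $\cK=\{k_0+1,\ldots,k_0+W\}$, with $k_0=k_0(S,m)$ the constant from Lemma~\ref{lem:Elem}. Since $u_{n+k,i}=F_i^{(k)}(u_{n,0},\ldots,u_{n,m})$ and $\{\vec w_n\}$ is purely periodic with period $\tau\ge N$, a shift of the summation range gives, for each $k\in\cK$,
$$
S_{\vec a}(N)=\sum_{n=0}^{N-1}\ep\(\sum_{i=0}^{m-1}a_i F_i^{(k)}(\vec w_n)\)+O(W),
$$
the implied constant absorbing $k_0$. Averaging over $k\in\cK$, swapping the order of summation, and applying H\"older's inequality in the variable $n$ then yields
$$
|S_{\vec a}(N)|^{2\nu}\ll W^{-2\nu}N^{2\nu-1}\sum_{n=0}^{N-1}\left|\sum_{k\in\cK}\ep\(\sum_{i=0}^{m-1}a_i F_i^{(k)}(\vec w_n)\)\right|^{2\nu}+W^{2\nu}.
$$
Because $N\le\tau$ and the sequence is purely periodic, the points $\vec w_0,\ldots,\vec w_{N-1}$ are pairwise distinct, so I may enlarge the inner sum over $n$ to a sum over all $(w_0,\ldots,w_m)\in\F_p^{m+1}$ without decreasing it.

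Next I would expand the $2\nu$-th power and interchange summation once more, so that the enlarged inner quantity becomes
$$
\sum_{\vec k,\vec l\in\cK^\nu}\ \sum_{w_0,\ldots,w_m=1}^{p}\ep\(F_{\vec a,\vec k,\vec l}(w_0,\ldots,w_m)\),
$$
with $F_{\vec a,\vec k,\vec l}$ exactly the polynomial of Lemma~\ref{lem:Elem}. When $\vec k$ and $\vec l$ are permutations of one another, $F_{\vec a,\vec k,\vec l}$ vanishes identically and the inner sum is $p^{m+1}$; there are $O(W^\nu)$ such pairs, contributing $O(W^\nu p^{m+1})$. For each of the $O(W^{2\nu})$ remaining pairs I would invoke Lemma~\ref{lem:Elem} --- its hypotheses $s_{0,1}\ldots s_{m-1,m}\ne0$ and $\gcd(a_0,\ldots,a_{m-1},p)=1$ (that is, $\vec a$ nonzero in $\F_p^m$) hold, and all entries of $\vec k,\vec l$ exceed $k_0$ --- to bound the inner sum by $O(K^m p^m)$ with $K\le k_0+W\ll W$, contributing $O(W^{2\nu+m}p^m)$. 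This gives
$$
|S_{\vec a}(N)|^{2\nu}\ll N^{2\nu-1}\(W^{-\nu}p^{m+1}+W^{m}p^{m}\)+W^{2\nu}.
$$
Choosing $W=\rf{p^{1/(m+\nu)}}$ balances the two terms inside the parentheses, each becoming $\asymp N^{2\nu-1}p^{(m^2+m\nu+m)/(m+\nu)}=N^{2\nu-1}p^{2\nu\alpha_{m,\nu}}$. When $N>p^{2\nu\alpha_{m,\nu}}$ the leftover error $W^{2\nu}\asymp p^{2\nu/(m+\nu)}$ is dominated by $N^{2\nu-1}p^{2\nu\alpha_{m,\nu}}$ (here $m\ge1$ makes $2\nu\alpha_{m,\nu}\ge1$), and when $N\le p^{2\nu\alpha_{m,\nu}}$ the asserted inequality follows at once from $|S_{\vec a}(N)|\le N$. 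Extracting $2\nu$-th roots then produces $S_{\vec a}(N)\ll N^{1-\beta_{m,\nu}}p^{\alpha_{m,\nu}}$ with $\beta_{m,\nu}=1/(2\nu)$.

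The analytic content is carried entirely by Lemma~\ref{lem:Elem}; everything else is bookkeeping. The point I expect to need the most care is the completion step: one must be sure that $\vec w_0,\ldots,\vec w_{N-1}$ are distinct --- this is exactly where pure periodicity together with $N\le\tau$ is used --- so that passing from the orbit sum to the full sum over $\F_p^{m+1}$ is legitimate, and one must separate cleanly the ``diagonal'' pairs $(\vec k,\vec l)$, namely those that are permutations of each other, which alone produce the main term $p^{m+1}$, from all the rest, which Lemma~\ref{lem:Elem} controls. Optimising $W$ afterwards is routine.
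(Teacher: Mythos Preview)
Your argument is correct and follows essentially the same route as the paper's proof: shift by $k$, average, apply H\"older in $n$, complete to $\F_p^{m+1}$ using pure periodicity and $N\le\tau$, expand the $2\nu$-th power, separate the permutation pairs from the rest, invoke Lemma~\ref{lem:Elem}, and optimise the shift parameter as $p^{1/(m+\nu)}$. The only cosmetic differences are that the paper calls the shift parameter $K$ and reserves $W$ for the intermediate double sum (so your choice of the letter $W$ clashes with the paper's notation), and that you spell out the absorption of the leftover $W^{2\nu}$ term via the trivial bound in the regime $N\le p^{2\nu\alpha_{m,\nu}}$, whereas the paper simply writes ``after simple calculations we obtain the desired result.''
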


\begin{proof} We follow the same argument 
as in  the proof 
of~\cite[Theorem~4]{OstShp}, however 
instead of the Weil bound we use now Lemma~\ref{lem:Elem}
(and thus we optimise the parameters differently). 
  
In particular, as in~\cite{OstShp} we obtain  
that for any integer $K \ge k_0$,
\begin{equation}
\label{eq:S and W}
(K-k_0+1) |  S_{\vec{a}}(N)|  \le  W + K^2,
\end{equation}
where $k_0$ is the same as in Lemma~\ref{lem:Elem} and
$$
W = \left |\sum_{n=0}^{N-1}\sum_{k=k_0}^{K}
\e\(\sum_{i=0}^{m-1} a_i u_{n+k,i}\) \right|.
$$
Using the H{\"o}lder inequality we derive 
(again exactly the same way as in~\cite{OstShp}) 
$$
W^{2\nu}   \le  N^{2\nu-1}  
 \sum_{k_1,\ell_1, \ldots, k_\nu, \ell_\nu = k_0}^{K} 	\sum_{w_0, \ldots, w_m\in \F_{p}^{m+1}}
 	\e\!\(F_{\vec{a},\vec{k},\vec{l}}(w_0, \ldots, w_m)\).
$$
For  $O(K^\nu)$ vectors 
 $$
 (k_1 \ldots, k_\nu)  \qquad \text{and}\qquad  (\ell_1 \ldots, \ell_\nu)
 $$
which are permutations of each other,  we estimate  the inner sum 
trivially as $p^{m+1}$.

For the other $O(K^{2\nu})$ vectors, we apply 
Lemma~\ref{lem:Elem}  getting the
upper bound $K^mp^{m}$ for the inner sum. Hence, 
$$W^{2\nu} \le K^\nu N^{2\nu-1} p^{m+1} +  K^{m+2\nu} N^{2\nu-1} p^{m}.
$$
Inserting this bound in~\eqref{eq:S and W}, we derive
$$
S_{\vec{a}}(N) 
\ll K^{-1/2}N^{1-1/2\nu} p^{(m+1)/2\nu} +  K^{m/2\nu} N^{1-1/2\nu} p^{m/2\nu} +K.
$$
Choosing
$$K = \rf{p^{1/(m+\nu)}}$$
(and assuming that $p$ is large enough, so $K\ge k_0$),
after simple calculations  we obtain the desired result.
\end{proof}

Using Lemma~\ref{lem:Kok-Szu},
we  derive the following improvement
of~\cite[Theorem~6]{OstShp}.

\begin{cor}
\label{cor:Discr m-1} 
Let  the sequence $\{\vec{u}_n\}$ be given by~\eqref{eq:Gen}
for     $\cF_k = \cF$, $k=1,2, \ldots$, with a 
polynomial 
system $\cF \in  \fF(S,m)$ of the form~\eqref{eq:Polys}
of total degree $d \ge 2$  
and such that $s_{0,1} \ldots s_{m-1,m} \ne 0$.
Assume that $\{\vec{w}_n\}$ is purely
periodic with period $\tau$. Then for any fixed integer $\nu\ge 1$, 
and any positive integer $N \le \tau$, 
the discrepancy  of the sequence 
$$
\(\frac{u_{n,0}}{p}, \ldots, \frac{u_{n,m-1}}{p}\),
\qquad n = 0,\ldots, N-1,
$$
satisfies the bound 
$O\(p^{\alpha_{m,\nu}}N^{-\beta_{m,\nu}} (\log p)^{m}\)$, 
where
$$
\alpha_{m,\nu}=\frac{m^2 + m\nu + m }{2\nu(m+\nu)} \qquad \text{and} 
\qquad \beta_{m,\nu} = \frac{1}{2\nu}
$$
and the implied constant depends  only on $d$, $m$
and $\nu$.
\end{cor}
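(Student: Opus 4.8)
The plan is to combine the exponential sum bound of Theorem~\ref{thm:ExpSum S} with the Erd\H os--Turan--Koksma inequality of Lemma~\ref{lem:Kok-Szu}, applied in dimension $s = m$. First I would apply Lemma~\ref{lem:Kok-Szu} to the point set $\Gamma = \{(u_{n,0}/p, \ldots, u_{n,m-1}/p)\}_{n=0}^{N-1}$ in $[0,1)^m$ with the parameter choice $H = p$. This reduces the discrepancy to the main error term $1/H = 1/p$ plus
$$
\frac{1}{N}\sum_{0 < |\vec{h}| \le H} \prod_{j=1}^m \frac{1}{|h_j|+1}\left|\sum_{n=0}^{N-1} \ep\(\sum_{j=1}^m h_j u_{n,j-1}\)\right|,
$$
where each inner sum is exactly $S_{\vec{a}}(N)$ with $\vec{a} = (h_1, \ldots, h_m)$ reduced modulo $p$ — note $\vec{a}$ is a nonzero vector in $\F_p^m$ precisely because $0 < |\vec{h}| \le p-1$ guarantees $\vec{h} \not\equiv 0 \pmod p$ (this is why $H = p$ rather than something larger is the right choice).

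Next I would substitute the bound $S_{\vec{a}}(N) \ll N^{1-\beta_{m,\nu}} p^{\alpha_{m,\nu}}$ from Theorem~\ref{thm:ExpSum S}, which is uniform in the nonzero vector $\vec{a}$. The exponential sum factor then pulls out of the sum over $\vec{h}$, leaving
$$
\frac{1}{N} \cdot N^{1-\beta_{m,\nu}} p^{\alpha_{m,\nu}} \sum_{0 < |\vec{h}| \le p} \prod_{j=1}^m \frac{1}{|h_j|+1} \ll N^{-\beta_{m,\nu}} p^{\alpha_{m,\nu}} (\log p)^m,
$$
using the standard estimate $\sum_{|h| \le p} 1/(|h|+1) \ll \log p$ for each of the $m$ coordinates. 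Finally I would check that the leftover term $1/p$ is dominated: since $\alpha_{m,\nu} > 0$ and $\beta_{m,\nu} = 1/(2\nu) \le 1/2$ with $N \le \tau$, the term $p^{\alpha_{m,\nu}} N^{-\beta_{m,\nu}}(\log p)^m$ is (for the bound to be nontrivial at all) of size at least a positive power of $p$ unless $N$ is comparably large, and in any case $1/p$ is absorbed into the stated $O$-term — one can simply note $1/p \le p^{\alpha_{m,\nu}} N^{-\beta_{m,\nu}}$ trivially since $N \le p^{m+1}$ forces $N^{-\beta_{m,\nu}} \ge p^{-(m+1)/2\nu}$, or handle the range where the bound exceeds $1$ by triviality.

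There is no real obstacle here: this is the routine passage from an exponential sum estimate to a discrepancy estimate, and the only points requiring a small amount of care are (i) making sure the reduction of $\vec{h} \bmod p$ stays nonzero, which dictates the choice $H = p$, and (ii) confirming that the $1/H$ term and the constants $C_m$ from Lemma~\ref{lem:Kok-Szu} are absorbed into the implied constant, which is permitted since all implied constants may depend on $d$, $m$, and $\nu$. The exponents $\alpha_{m,\nu}$ and $\beta_{m,\nu}$ carry over verbatim from Theorem~\ref{thm:ExpSum S}, so there is nothing further to optimise at this stage.
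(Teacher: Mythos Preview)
Your proposal is correct and follows exactly the route the paper indicates: the paper does not spell out a proof but simply states that the corollary follows from Lemma~\ref{lem:Kok-Szu} applied to the bound of Theorem~\ref{thm:ExpSum S}, which is precisely what you do. The choice $H=p$ to keep $\vec{h}\bmod p$ nonzero, the estimate $\sum_{|h|<p}1/(|h|+1)\ll\log p$ in each coordinate, and the observation that the $1/H$ term is dominated are all the standard steps here, so there is nothing to add.
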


We note that the  values of $\alpha_{m,\nu}$ and 
$\beta_{m,\nu}$ in Theorem~\ref{thm:ExpSum R}
and Corollary~\ref{cor:Discr m-1} improve 
on the values
$$
\alpha_{m,\nu}=\frac{2m^2 + 2m\nu + 2m +\nu}{4\nu(m+\nu)} \qquad \text{and} 
\qquad \beta_{m,\nu} = \frac{1}{2\nu}
$$
from~\cite{OstShp}. 
In particular, both Theorem~\ref{thm:ExpSum S}
and Corollary~\ref{cor:Discr m-1} are nontrivial if 
$\tau \ge N \ge p^{m+\varepsilon}$ with fixed $\varepsilon > 0$
(while the corresponding bounds of~\cite{OstShp}
are nontrivial only if $\tau \ge N \ge p^{m+1/2 +\varepsilon}$).

\begin{theorem}
\label{thm:ExpSum R} 
Let  the sequence $\{\vec{u}_n\}$ be given by~\eqref{eq:Gen}
for     $\cF_k = \cF$, $k=1,2, \ldots$, with a 
polynomial 
system $\cF \in  \fF(S,m)$ of the form~\eqref{eq:Polys}
of total degree $d \ge 2$  
and such that $s_{0,1} \ldots s_{m-1,m} \ne 0$.
Assume that $\{\vec{w}_n\}$ is purely
periodic with period $\tau$.  Then for any fixed real $\varepsilon > 0$,
there exist $\delta > 0$ such that for 
for any positive integer  $N$ with 
$\tau \ge N \ge p^{m+\varepsilon}$ and   nonzero vector $\vec{b}  \in \F_p^{m+1}$ 
the bound
$$
 T_{\vec{b}}(N)  \ll Np^{-\delta} 
$$
holds
and the implied constant depends  only on $d$, $m$
and $\varepsilon$.
\end{theorem}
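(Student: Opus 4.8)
The plan is to handle the full vector $\vec w_n$ by separating the contribution of the last coordinate $u_{n,m}$, which evolves under a one‑dimensional \emph{linear} map, from the contribution of the truncated vector $\vec u_n = (u_{n,0},\dots,u_{n,m-1})$, whose digits are already controlled by Theorem~\ref{thm:ExpSum S}. Write $\vec b = (a_0,\dots,a_{m-1},b_m)$ with $\vec a = (a_0,\dots,a_{m-1})$. If $\vec a \ne \vec 0$ we are essentially in the situation of Theorem~\ref{thm:ExpSum S} — the extra term $b_m u_{n,m}$ only changes the polynomial $\Psi$ appearing in Lemma~\ref{lem:Elem}, not the crucial ``linear in $X_s$'' structure, so the same H\"older‑plus‑zero‑counting argument gives $T_{\vec b}(N) \ll N p^{-\delta}$ for the appropriate $\delta = \delta(\varepsilon)$. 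The genuinely new case is $\vec a = \vec 0$, $b_m \ne 0$, i.e. the pure sum $\sum_{n=0}^{N-1}\ep(b_m u_{n,m})$, where the cancellation must come entirely from the last component.

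For that case, recall $u_{n,m} = g_m^n u_{0,m} + c_n$ where $c_n$ is the obvious geometric‑type term, so $u_{n,m}$ is $g_m^n$ times a fixed nonzero constant plus a constant (if $g_m \ne 1$; the case $g_m=1$ gives an arithmetic progression, handled trivially or excluded by pure periodicity considerations). Thus up to a multiplicative constant absorbed into $b_m$ and an additive shift, the sum is $\sum_{n=0}^{N-1}\ep(b\, g_m^n)$ for some $b \not\equiv 0$, a sum over (a segment of) the orbit of multiplication by $g_m$. Here is precisely where the deep input enters: I would invoke the Bourgain–Glibichuk–Konyagin bound~\cite{BGK} on exponential sums along multiplicative subgroups (or, via the standard completion technique, along consecutive powers), which yields $\sum_{n=0}^{N-1}\ep(b g_m^n) \ll N p^{-\delta}$ provided $N \ge p^{\varepsilon}$ and the multiplicative order $t$ of $g_m$ satisfies $t \ge p^{\varepsilon}$ — and since $\vec w_n$ is purely periodic with period $\tau \ge N \ge p^{m+\varepsilon} \ge p^{\varepsilon}$, the order $t$ of $g_m$ (which divides $\tau$ but is also at least $\tau/p^m$ by the structure of the iteration, hence $\ge p^{\varepsilon}$) is large enough.

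The remaining step is bookkeeping: combine the two regimes. For a general nonzero $\vec b$, either $\vec a \ne \vec 0$ and Theorem~\ref{thm:ExpSum S}'s method applies verbatim after noting that $b_m u_{n,m}$ contributes only a function of $(u_{n,s+1},\dots,u_{n,m})$ which merges into $\Psi_{\vec a,\vec k,\vec l}$ in Lemma~\ref{lem:Elem} (so the zero‑set estimate is unaffected), giving a power saving as soon as $N \ge p^{m+\varepsilon}$; or $\vec a = \vec 0$ and the BGK bound gives the saving as soon as $N \ge p^{\varepsilon}$. Taking $\delta$ to be the minimum of the two exponents produces the claimed uniform bound $T_{\vec b}(N) \ll N p^{-\delta}$. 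I expect the main obstacle to be the second point: verifying that the multiplicative order of $g_m$ is forced to be $\ge p^{\varepsilon}$ (rather than possibly tiny while $\tau$ stays large because of the higher coordinates), so that the hypotheses of the BGK estimate are actually met — this needs a short argument using the triangular structure~\eqref{eq:Polys} to show $\tau \mid t\, p^{O(1)}$, or more precisely that $t \gg \tau^{1/(m+1)}$ or similar, which is comfortably $\ge p^{\varepsilon/(m+1)}$.
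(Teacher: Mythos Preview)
Your proposal is correct and follows essentially the same route as the paper: split on whether $(b_0,\dots,b_{m-1})$ is nonzero modulo $p$, absorb the extra $b_m$-term into $\Psi_{\vec a,\vec k,\vec l}$ in the first case, and in the second case reduce to an exponential sum along powers of $g_m$ and invoke Bourgain--Glibichuk--Konyagin. Your first stated inequality $t \ge \tau/p^m$ (from the observation that $\vec w_n$ can take at most $t p^m$ values, so $\tau \le t p^m$) is exactly what the paper proves and is already sharp enough---your later hedge toward $t \gg \tau^{1/(m+1)}$ is unnecessary; the only small point you skip is that the coefficient $u_{0,m}+h_m(g_m-1)^{-1}$ in front of $g_m^n$ must be nonzero, which the paper checks by noting that otherwise $u_{1,m}=u_{0,m}$, forcing $u_{n,m}$ constant and hence $\tau\le p^m$, contradicting $\tau\ge p^{m+\varepsilon}$.
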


\begin{proof} If 
$\gcd(b_0, \ldots, b_{m-1} ,p) = 1$ then the same argument 
as in  the proof  Theorem~\ref{thm:ExpSum R} leads to a fully
analogous  bound
$$
T_{\vec{b}}(N)    \ll N^{1-\beta_{m,\nu}}p^{\alpha_{m,\nu}}.
$$
Thus for $\tau \ge N \ge p^{m+\varepsilon}$, taking a 
sufficiently large $\nu$ we obtain the desired estimate.

So it remains to consider 
the case 
$$
b_0 \equiv \ldots  \equiv b_{m-1}  \equiv 0 \pmod p \mand 
\gcd(b_{m} ,p) = 1, 
$$
in which case we simply obtain 
$$
T_{\vec{b}}(N) =  \sum_{n=0}^{N-1} \ep\(b_mu_{n,m}\). 
$$
A trivial inductive argument shows that 
\begin{equation}
\label{eq:um expl ne 1}
u_{n,m} = g_m^n u_{0,m} + \frac{g_m^n-1}{g_m-1} h_m, \qquad n =0, 1, \ldots, 
\end{equation}
if $g_m \ne 1$  and 
\begin{equation}
\label{eq:um expl = 1}
u_{n,m} = n h_m, \qquad n =0, 1, \ldots, 
\end{equation}
if $g_m= 1$ (where $g_m$ and $h_m$ are as in~\eqref{eq:Polys}).

We consider the case $g_m\ne 1$ first in which we obtain
$$
T_{\vec{b}}(N) =  \ep(-b_m h_m(g_m-1)^{-1})\sum_{n=0}^{N-1} 
\ep\(b_m g_m^n \(u_{0,m} + h_m(g_m-1)^{-1}\)\). 
$$
Clearly, 
if $t$ is the multiplicative order 
of $g_m$ then we see from~\eqref{eq:um expl ne 1} 
that $u_{n,m}$, $n =0, 1, \ldots,$ takes exactly $t$ 
distinct values. Since the truncated vector 
$\vec{u}_n$ takes at most $p^{m}$ values we see that the 
full vector $\vec{w}_n$ takes at most $tp^m$ values.
Thus 
$$\tau \le p^mt.
$$ 
Using the condition $\tau \ge N \ge p^{m+\varepsilon}$ we
obtain 
\begin{eqnarray}
\label{eq:t large}
t \ge p^{\varepsilon}.
\end{eqnarray}
In particular~\eqref{eq:t large} implies that 
$$ 
u_{0,m} + h_m(g_m-1)^{-1} \not \equiv 0 \pmod p
$$ 
as otherwise
$$
u_{1,m} \equiv   g_m u_{0,m} + h_m \equiv  u_{0,m}   \pmod p
$$
and $t=1$.

 We now recall that by the result of~\cite{BGK}, 
 for any $\varepsilon> 0$ there exists $\eta >0$
 such that under the condition~\eqref{eq:t large} we have
$$
\sum_{n=1}^t \ep(c g_m^n) \ll  t p^{-\eta}
$$ 
which concludes the proof in the case of $g_m > 1$. 

For   $g_m= 1$ we recall~\eqref{eq:um expl = 1} and then 
using~\eqref{eq:lin sum} we derive the result.   
\end{proof}

Using again Lemma~\ref{lem:Kok-Szu},
we  derive the following generalisation 
of~\cite[Theorem~6]{OstShp} (the bound is $\log p$ weaker
as we work in the dimension $m+1$ instead of $m$).

\begin{cor}
\label{cor:Discr m} 
Let  the sequence $\{\vec{u}_n\}$ be given by~\eqref{eq:Gen}
for     $\cF_k = \cF$, $k=1,2, \ldots$, with a 
polynomial 
system $\cF \in  \fF(S,m)$ of the form~\eqref{eq:Polys}
of total degree $d \ge 2$  
and such that $s_{0,1} \ldots s_{m-1,m} \ne 0$.
Assume that $\{\vec{w}_n\}$ is purely
periodic with period $\tau$.
 Then for any fixed real $\varepsilon > 0$,
there exist $\gamma > 0$ such that 
for any positive integer  $N$ with 
$\tau \ge N \ge p^{m+\varepsilon}$ the discrepancy   of the sequence 
$$
\(\frac{u_{n,0}}{p}, \ldots, \frac{u_{n,m}}{p}\),
\qquad n = 0,\ldots, N-1,
$$
satisfies the bound  
$O\(p^{-\gamma}\)$,
where the implied constant depends  only on $d$, $m$
and $\varepsilon$.
\end{cor}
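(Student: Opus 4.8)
The plan is to deduce this from the Erd\H os--Turan--Koksma inequality (Lemma~\ref{lem:Kok-Szu}) together with the exponential sum bound of Theorem~\ref{thm:ExpSum R}, in exactly the way Corollary~\ref{cor:Discr m-1} follows from Theorem~\ref{thm:ExpSum S}, but now in dimension $s = m+1$ and using the sums $T_{\vec b}$ rather than $S_{\vec a}$.

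First I would apply Lemma~\ref{lem:Kok-Szu} to the point set $\Gamma = \{(u_{n,0}/p, \ldots, u_{n,m}/p)\}_{n=0}^{N-1} \subseteq [0,1)^{m+1}$ with a parameter $H$ chosen so that $1 < H < p$. For a nonzero integer vector $\vec h = (h_1, \ldots, h_{m+1})$ with $|\vec h| < H$, the inner character sum appearing in Lemma~\ref{lem:Kok-Szu} is
$$
\sum_{n=0}^{N-1} \exp\!\left(2\pi i \sum_{j=1}^{m+1} h_j u_{n,j-1}/p\right) = T_{\vec b}(N),
$$
where $\vec b = (b_0, \ldots, b_m) \in \F_p^{m+1}$ is the reduction of $(h_1, \ldots, h_{m+1})$ modulo $p$. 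The constraint $H < p$ ensures that $\vec b \ne \vec 0$: every nonzero component $h_j$ satisfies $0 < |h_j| < p$ and hence stays nonzero modulo $p$. Thus Theorem~\ref{thm:ExpSum R} applies --- its hypotheses $\tau \ge N \ge p^{m+\varepsilon}$ are exactly those assumed here --- and, with the $\delta = \delta(\varepsilon,d,m) > 0$ it provides, yields $|T_{\vec b}(N)| \ll N p^{-\delta}$ uniformly in $\vec h$.

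Substituting this into Lemma~\ref{lem:Kok-Szu} and using the standard bound $\sum_{0 < |\vec h| \le H} \prod_{j=1}^{m+1} (|h_j|+1)^{-1} \ll (\log H)^{m+1}$, I would obtain
$$
\Delta(\Gamma) \ll \frac{1}{H} + p^{-\delta}(\log H)^{m+1} \ll \frac{1}{H} + p^{-\delta}(\log p)^{m+1}.
$$
Taking, for instance, $H = \rf{p^{\delta/2}}$ (admissible since $H < p$ for all large $p$, while the claimed bound is trivial for small $p$ after adjusting the implied constant) makes both terms $O\bigl(p^{-\delta/2}(\log p)^{m+1}\bigr)$, which is $O(p^{-\gamma})$ for any fixed $\gamma < \delta/2$; the implied constant then depends only on $d$, $m$ and $\varepsilon$. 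The extra factor $\log p$ relative to Corollary~\ref{cor:Discr m-1} enters precisely because the product in Lemma~\ref{lem:Kok-Szu} now runs over $m+1$ coordinates, but it is absorbed harmlessly into the power saving.

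I do not expect any real obstacle: all the difficulty is already contained in Theorem~\ref{thm:ExpSum R} (and, through it, in the bound of Bourgain, Glibichuk and Konyagin invoked for the degenerate frequency vectors $\vec b$). The only points needing a little care are keeping $H < p$, so that reduction modulo $p$ does not turn a nonzero frequency vector into the zero vector and Theorem~\ref{thm:ExpSum R} remains genuinely applicable, and recording that the dependence of $\gamma$ on $\varepsilon$ is inherited from that of $\delta$ in Theorem~\ref{thm:ExpSum R}, consistently with the statement.
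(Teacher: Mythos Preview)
Your proposal is correct and follows exactly the approach the paper intends: the corollary is stated immediately after Theorem~\ref{thm:ExpSum R} with the remark that it follows ``using again Lemma~\ref{lem:Kok-Szu}'' (the paper even notes the extra $\log p$ from working in dimension $m+1$, just as you do). Your careful handling of $H<p$ so that the reduced frequency vector stays nonzero, and your choice of $H$ to balance the two terms, fill in precisely the details the paper leaves implicit.
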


Certainly one can get stronger and more explicit statements 
in both  Theorem~\ref{thm:ExpSum R}
and Corollary~\ref{cor:Discr m} if more information 
about the multiplicative order $t$ modulo $p$ is available.
For example, if it is know that $t \ge p^{1/3 + \varepsilon}$
then one can use  the bound of 
Heath-Brown and  Konyagin~\cite{HBK}  
(see also~\cite[Theorem~3.4]{KoSh})
$$
\sum_{n=1}^t \ep(c g_m^n) \ll 
\min\{  p^{1/2}, p^{1/4}t^{3/8}, p^{1/8}t^{5/8}\}.
$$
For smaller values of $t$, but with 
$t \ge p^{1/4}$ one can use the bound
of Bourgain and Garaev~\cite{BouGar}, see also~\cite{Kon}. 

We remark that it is easy to see that a randomly chosen element 
$g \in \F_p^*$  is of order $t = p^{1+o(1)}$ with probability
$1 + o(1)$ as $p\to \infty$. 

Furthermore, it is also well-known that any fixed integer 
$g \ne 0, \pm 1$  is of multiplicative order
\begin{equation}
\label{eq:t sqrt}
t \ge p^{1/2},
\end{equation}
for all but $o(x/\log x)$ primes $p\le x$, see~\cite{ErdMur,IndlTim,Papp}
for various improvements of this result.

\subsection{Permutation Systems}
\label{eq:PerSys}

We now consider polynomial systems of the form~\eqref{eq:Poly Syst}  which  
permute the elements of $\Fp^{m+1}$. 
Lidl and Niederreiter~\cite{LN,LN1} call such  systems 
{\it orthogonal polynomial systems\/}, but we here refer to
them as {\it permutation polynomial systems\/}.

We fix a sequence $\cF_k$, $k =1,2, \ldots$, of polynomial 
systems~\eqref{eq:Poly Syst}. 
For integer vectors $\vec{b} = (b_0, \ldots, b_{m-1}) \in \Fp^m$
and $\vec{a} = (a_0, \ldots, a_{m}) \in \Fp^{m+1}$ and 
integers $c,M,N$ with $M \ge 1$ and $N \ge 1$, we consider the average
values of  exponential sums
\begin{equation*}
\begin{split}
U_{\vec{a},c}(M,N) &=  \sum_{w_{0},\ldots, w_{m}\in \F_{p}}
\left|\sum_{n=0}^{N-1}
\ep \(\sum_{j=0}^{m-1} a_j F_j^{(n)}(w_0,\ldots,w_m)\) \eM(c n)\right|^2,\\
V_{\vec{b},c}(M,N) &=  \sum_{w_{0},\ldots, w_{m}\in \F_{p}}
\left|\sum_{n=0}^{N-1}
\ep \(\sum_{j=0}^{m} b_j F_j^{(n)}(w_0,\ldots,w_m)\) \eM(c n)\right|^2,
\end{split}
\end{equation*}
where, as before, the polynomials $F_i^{(k)}$, $i=0,\ldots,m$, $k =1,2, \ldots$ are
given by~\eqref{eq:Poly Rec}. 

Then using Lemma~\ref{lem:LinTerm+Deg}  in the argument 
of~\cite{Ost} one immediately obtains the following generalisation of the
bound of exponential sums from~\cite{Ost}.

\begin{theorem}
\label{thm:ExpSumAvV} Assume that $\cF_k \in \fF(S,m)$, $k =1,2, \ldots$, are
permutation polynomial  systems~\eqref{eq:Poly Syst}, 
and such that $s_{0,1} \ldots s_{m-1,m} \ne 0$.
Then for any  positive integers $c$, $M$, $N$ 
and any nonzero vector $\vec{b}  \in \F_p^m$
we have
$$
U_{\vec{a},c}(M,N)  \ll A(N,p),
$$
where
$$
A(N,p) =
\left\{ \begin{array}{ll}
N p^{m+1}  & \mbox{if}\ N \le p^{1/{(m+1)}}, \\
N^2 p^{m(m+2)/(m+1)} & \mbox{if}\ N > p^{1/{(m+1)}}.
\end{array} \right.
$$
\end{theorem}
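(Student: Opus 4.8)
I would adapt the argument of~\cite{Ost}, which proves the analogous estimate when a single fixed system is iterated; the only substantive change is to use Lemma~\ref{lem:LinTerm+Deg} in place of the structural description of the iterates employed there, the point being that Lemma~\ref{lem:LinTerm+Deg} holds \emph{uniformly} over sequences of distinct systems from $\fF(S,m)$ (the degree bounds depend only on $S$, $m$ and the iteration count). Throughout, $\vec a\neq\vec0$ and I write $c_n(w)=\ep\bigl(\sum_{j=0}^{m-1}a_jF_j^{(n)}(w_0,\ldots,w_m)\bigr)\eM(cn)$, so that $U_{\vec a,c}(M,N)=\sum_{w_0,\ldots,w_m\in\F_p}\bigl|\sum_{n=0}^{N-1}c_n(w)\bigr|^2$.

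In the first range $N\le p^{1/(m+1)}$, I would expand the square and exchange the order of summation,
\begin{equation*}
U_{\vec a,c}(M,N)=\sum_{n_1,n_2=0}^{N-1}\eM\bigl(c(n_1-n_2)\bigr)\sum_{w_0,\ldots,w_m\in\F_p}\ep\Bigl(\sum_{j=0}^{m-1}a_j\bigl(F_j^{(n_1)}(w)-F_j^{(n_2)}(w)\bigr)\Bigr),
\end{equation*}
and treat the diagonal and off-diagonal parts separately. The $N$ diagonal terms contribute exactly $Np^{m+1}$. For an off-diagonal pair, say with $n_1>n_2$ and $k=n_1-n_2$ (the opposite case being its complex conjugate), I would use that $F^{(n_2)}=\cF_{n_2}\circ\cdots\circ\cF_1$ is a permutation of $\F_p^{m+1}$ (a composition of permutation systems) to substitute $v=F^{(n_2)}(w)$, turning the inner sum into $\sum_v\ep\bigl(\sum_j a_j(\Phi_j(v)-v_j)\bigr)$, where $\Phi=\cF_{n_1}\circ\cdots\circ\cF_{n_2+1}$ is a $k$-fold iterate of systems from $\fF(S,m)$. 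Applying Lemma~\ref{lem:LinTerm+Deg} to $\Phi$, taking $s$ to be the least index with $a_s\neq0$, summing over $v_0,\ldots,v_{s-1}$ trivially and over $v_s$ by the identity~\eqref{eq:lin sum}, the inner sum reduces to $p^{s+1}$ times the number of $(v_{s+1},\ldots,v_m)\in\F_p^{m-s}$ at which $\widetilde G_{k,s}-1$ vanishes modulo $p$. This polynomial has degree $O(k^{m-s})$ and is nonzero modulo $p$ (its unique leading monomial, of positive degree, is preserved under iteration, cf.\ the proof of Lemma~\ref{lem:LinTerm+Deg} and \cite[Lemma~1]{OstShp}), so the elementary zero-counting bound already used in the proof of Lemma~\ref{lem:Elem} gives a count $O(k^{m-s}p^{m-s-1})$. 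Hence each off-diagonal term is $O(k^mp^m)$, which is non-vacuous precisely because $k<N\le p^{1/(m+1)}\le p^{1/m}$; summing over the $O(N^2)$ pairs yields $O(N^{m+2}p^m)$, which is $O(Np^{m+1})$ since $N^{m+1}\le p$. Thus $U_{\vec a,c}(M,N)\ll Np^{m+1}$ in this range.

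In the complementary range $N>p^{1/(m+1)}$, I would reduce to the first range by a block decomposition. Set $L=\lfloor p^{1/(m+1)}\rfloor$, split $\{0,\ldots,N-1\}$ into $B=\lceil N/L\rceil$ consecutive blocks, each of length at most $L$, and apply Cauchy--Schwarz in the form $\bigl|\sum_{n=0}^{N-1}c_n(w)\bigr|^2\le B\sum_{\text{blocks }I}\bigl|\sum_{n\in I}c_n(w)\bigr|^2$; summing over $w$ gives $U_{\vec a,c}(M,N)\le B\sum_I\sum_w\bigl|\sum_{n\in I}c_n(w)\bigr|^2$. For a block $I$ starting at $n_0$, the substitution $v=F^{(n_0)}(w)$ (again a permutation) together with the fact that the shifted sequence $\cF_{n_0+1},\cF_{n_0+2},\ldots$ again lies in $\fF(S,m)$ and consists of permutation systems identifies $\sum_w\bigl|\sum_{n\in I}c_n(w)\bigr|^2$ with a sum of precisely the type $U_{\vec a,c}(M,|I|)$ for $|I|\le L\le p^{1/(m+1)}$ (the unimodular factor $\eM(cn_0)$ factors out). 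The first part of the proof bounds each of these by $O(Lp^{m+1})$, whence
$$
U_{\vec a,c}(M,N)\ll B^2Lp^{m+1}\ll \frac{N^2}{L}\,p^{m+1}\ll N^2p^{m(m+2)/(m+1)},
$$
which is the required estimate; for the finitely many small $p$ it holds trivially after adjusting the implied constant.

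The only genuinely substantive ingredient is the off-diagonal estimate of the first range, which is essentially a rerun of the proof of Lemma~\ref{lem:Elem}. What makes the whole scheme work for \emph{distinct} systems is, on the one hand, the uniform structural description furnished by Lemma~\ref{lem:LinTerm+Deg}, and on the other hand the closure of $\fF(S,m)$ under the index-shift used in both the permutation substitution and the block decomposition. I do not expect an obstacle beyond careful bookkeeping of the implied constants, which depend only on $m$ and $S$ (hence on $d$).
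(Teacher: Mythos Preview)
Your proposal is correct and is essentially what the paper intends: the paper gives no detailed proof, merely stating that one ``uses Lemma~\ref{lem:LinTerm+Deg} in the argument of~\cite{Ost}'' to pass from a fixed system to a varying sequence of systems, and your write-up is a faithful and valid fleshing-out of exactly that---the off-diagonal estimate via the permutation substitution, linearity in $v_s$, and zero-counting (the core of~\cite[Lemma~2]{Ost} and Lemma~\ref{lem:Elem} here), together with the block/Cauchy--Schwarz reduction for the range $N>p^{1/(m+1)}$. The two points you correctly isolate as making the extension work---the uniformity in Lemma~\ref{lem:LinTerm+Deg} over sequences in $\fF(S,m)$ and the closure of $\fF(S,m)$ under index shifts---are precisely what the paper relies on implicitly.
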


Exactly as in~\cite{Ost}, this immediately implies a discrepancy bound which 
holds for almost all initial values $\vec{v} \in \F_p^{m+1}$. 
 We note that 
in~\cite{Ost} only the case of when at each step 
the same polynomial system $\cF_k = \cF$ is applied but the 
proof, based only on the bound of the sums $U_{\vec{a},c}(M,N)$,  holds for distinct polynomial systems $\cF_k \in \fF(S,m)$ 
without any changes.

\begin{cor}
\label{cor:DiscrAvu}
Let $0<\varepsilon<1$ and let the sequence $\{\vec{u}_n(\vec{v})\}$ be given by~\eqref{eq:Gen}
with the initial vector of initial values $\vec{v} \in \F_p^{m+1}$, where
$\cF_k \in \fF(S,m)$, $k =1,2, \ldots$, are
permutation polynomial  systems~\eqref{eq:Poly Syst}, 
and such that $s_{0,1} \ldots s_{m-1,m} \ne 0$. Then for all initial values $\vec{v}\in \Fp^{m+1}$  except at most
$O(\varepsilon p^{m+1})$, 
and any positive integer $N \le p^{m+1}$, 
the discrepancy $D_N(\vec{v})$ of the sequence
$$
\(\frac{u_{n,0}(\vec{v})}{p}, \ldots, \frac{u_{n,m-1}(\vec{v})}{p}\),
\qquad n = 0,\ldots, N-1,
$$
satisfies the bound 
$$
D_N(\vec{v})  \ll \varepsilon^{-1} C(N,p),
$$
where
$$
C(N,p) =
\left\{ \begin{array}{ll}
N^{-1/2}(\log N)^{m+1} \log p & \mbox{if}\ N \le p^{1/{(m+1)}}, \\
p^{-1/2(m+1)} (\log N)^{m+1} \log p & \mbox{if}\ N > p^{1/{(m+1)}}.
\end{array} \right.
$$
\end{cor}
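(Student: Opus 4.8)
The plan is to deduce the discrepancy bound from the mean-square exponential sum estimate of Theorem~\ref{thm:ExpSumAvV} by the standard route: combine the Erd\H os--Turan--Koksma inequality (Lemma~\ref{lem:Kok-Szu}) with a Cauchy--Schwarz / averaging argument over the initial values $\vec v$. First I would fix a nonzero vector $\vec a=(a_0,\ldots,a_{m-1})\in\Z^m$ with $|\vec a|$ not too large, and for each initial value $\vec v\in\F_p^{m+1}$ write $S_{\vec a}(N,\vec v)=\sum_{n=0}^{N-1}\e_p\(\sum_{j=0}^{m-1}a_j u_{n,j}(\vec v)\)$. Since $u_{n,j}(\vec v)=F_j^{(n)}(v_0,\ldots,v_m)$, summing $|S_{\vec a}(N,\vec v)|^2$ over all $\vec v\in\F_p^{m+1}$ gives exactly $U_{\vec a,c}(M,N)$ with $c=0$ (any fixed $M$), which by Theorem~\ref{thm:ExpSumAvV} is $O(A(N,p))$. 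Hence the average of $|S_{\vec a}(N,\vec v)|^2$ over $\vec v$ is $O(A(N,p)p^{-(m+1)})$, and by Markov's inequality the set of $\vec v$ for which $|S_{\vec a}(N,\vec v)|^2$ exceeds $\varepsilon^{-1}$ times this average has size at most $\varepsilon p^{m+1}$.

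The next step is to remove the dependence of the exceptional set on $\vec a$. Because the Erd\H os--Turan--Koksma bound only involves vectors $\vec h$ with $0<|\vec h|\le H$ for a parameter $H$ I get to choose, and because the weights $\prod_j(|h_j|+1)^{-1}$ make the full sum over such $\vec h$ only $O((\log H)^m)$, I would take $H$ of polynomial size in $p$ (so that $\log H\ll\log p$), apply the Markov bound above to each of the $O(H^m)$ vectors $\vec h$ with exceptional parameter $\varepsilon/H^m$ in place of $\varepsilon$, and take the union of these exceptional sets. Rescaling $\varepsilon$ absorbs the $H^m$ factor, so outside an exceptional set of size $O(\varepsilon p^{m+1})$ we have $|S_{\vec h}(N,\vec v)|\ll\varepsilon^{-1/2}(A(N,p)p^{-(m+1)})^{1/2}$ simultaneously for all $\vec h$ with $0<|\vec h|\le H$. (One can be a little more careful and choose the threshold $H^m$-uniformly, but the rescaling trick is cleanest.)

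Plugging this uniform bound into Lemma~\ref{lem:Kok-Szu} with $s=m$ yields
$$
D_N(\vec v)\ll \frac1H+\frac1N(\log p)^m\,\varepsilon^{-1/2}\(A(N,p)p^{-(m+1)}\)^{1/2}.
$$
It then remains to substitute the two branches of $A(N,p)$ from Theorem~\ref{thm:ExpSumAvV} and optimise the free parameter $H$. When $N\le p^{1/(m+1)}$ one has $A(N,p)p^{-(m+1)}=N$, so the second term is $\ll N^{-1/2}(\log p)^m\varepsilon^{-1/2}$; choosing $H$ a suitable power of $p$ makes $1/H$ negligible and, after reconciling the $\varepsilon$ powers and the stated $(\log N)^{m+1}\log p$ factor (the extra $\log N$ coming from a more careful treatment of the ETK sum or a dyadic decomposition in $H$), gives the first branch of $C(N,p)$. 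When $N>p^{1/(m+1)}$ one has $A(N,p)p^{-(m+1)}=N^2p^{-1/(m+1)}$, so the second term is $\ll p^{-1/2(m+1)}(\log p)^m\varepsilon^{-1/2}$, independent of $N$, yielding the second branch.

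The main obstacle, and the only genuinely delicate point, is the uniformity over the Fourier coefficients $\vec h$: one must make sure the exceptional set can be chosen independently of $\vec h$ while keeping its size $O(\varepsilon p^{m+1})$, and that the accumulated logarithmic factors match the claimed $(\log N)^{m+1}\log p$ rather than something larger. Everything else—the identification of $\sum_{\vec v}|S_{\vec a}(N,\vec v)|^2$ with $U_{\vec a,0}(M,N)$, the Markov inequality, and the two-branch optimisation of $H$—is routine and parallels the corresponding argument in~\cite{Ost} verbatim, since that argument used only the bound on $U_{\vec a,c}(M,N)$ and not the equality of the polynomial systems at each step.
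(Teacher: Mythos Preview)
Your overall route---combine Theorem~\ref{thm:ExpSumAvV} with Lemma~\ref{lem:Kok-Szu} via averaging over $\vec v$ and then Markov---is indeed what the paper intends (it simply defers to~\cite{Ost}). But your execution of the uniformity step over the Fourier coefficients $\vec h$ contains a genuine error that costs a polynomial factor, not just a logarithm.

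When you replace $\varepsilon$ by $\varepsilon/H^m$ in the Markov step, the union of the exceptional sets does have size $O(\varepsilon p^{m+1})$, but the pointwise bound you get outside that set is
\[
|S_{\vec h}(N,\vec v)|\ \ll\ H^{m/2}\,\varepsilon^{-1/2}\(A(N,p)p^{-(m+1)}\)^{1/2},
\]
not $\varepsilon^{-1/2}(A(N,p)p^{-(m+1)})^{1/2}$ as you write: the threshold in Markov scales inversely with the exceptional-set parameter, so the $H^m$ you ``absorbed'' on one side reappears as $H^{m/2}$ on the other. Plugging this into Lemma~\ref{lem:Kok-Szu} gives, in the range $N\le p^{1/(m+1)}$, a main term $\asymp N^{-1/2}H^{m/2}(\log H)^m$; balancing against $1/H$ forces $H\asymp N^{1/(m+2)}$ and yields only $D_N(\vec v)\ll N^{-1/(m+2)}$, far short of the claimed $N^{-1/2}$.

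The remedy used in~\cite{Ost} is to avoid the pointwise union bound altogether: one first sums the whole Erd\H os--Tur\'an--Koksma expression over $\vec v$, bounds each $\sum_{\vec v}|S_{\vec h}(N,\vec v)|$ by Cauchy--Schwarz as $\(p^{m+1}U_{\vec h,c}(M,N)\)^{1/2}$, and only then applies Markov to the resulting estimate for $\sum_{\vec v}D_N(\vec v)$. This interchange of averaging and ETK loses only logarithmic factors. Note also that the twist $\eM(cn)$ in $U_{\vec a,c}(M,N)$, which you discard by setting $c=0$, is exactly what the completion technique in~\cite{Ost} uses to make the exceptional set independent of $N$; this, rather than any ``dyadic decomposition in $H$'', is the source of the additional $\log N$ factors in $C(N,p)$.
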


We now show that the distribution of the full 
vectors $\{\vec{w}_n(\vec{v})\}$
can be studied as well.

\begin{theorem}
\label{thm:ExpSumAvU} Let $\cF_k \in \fF(S,m)$ be a sequence of 
permutation polynomial  systems~\eqref{eq:Poly Syst} 
and such that $s_{0,1} \ldots s_{m-1,m} \ne 0$, 
satisfying also the additional condition that the last polynomial in all these systems has the same coefficient $g_m\in\Fp$ of $X_m$, that is,
$$F_{k,m}(X_0, \ldots ,X_m) = g_mX_m+h_{k,m}, \qquad k =1,2, \ldots.
$$ 
Denote by $t$ the period of $g_m$ if $g_m  \ne 1$ and put $t=p$ if $g_m = 1$.
Then for any  positive integers $c$, $M$,  $N$ and any nonzero 
vector $\vec{b} \in \F_p^{m+1}$
we have
$$
V_{\vec{b},c}(M,N)  \ll B(N,t,p), 
$$
where 
$$
B(N,t,p)=A(N,p)+N^2 t^{-1} p^{m+1} 
$$
and $A(N,p)$ is defined as in Theorem~\ref{thm:ExpSumAvV}. 
\end{theorem}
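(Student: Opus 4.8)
The plan is to reduce the sum $V_{\vec{b},c}(M,N)$ to a combination of the sum $U_{\vec{a},c}(M,N)$ already controlled by Theorem~\ref{thm:ExpSumAvV} together with an extra contribution coming solely from the last coordinate $u_{n,m}$. Write $\vec{b} = (b_0, \ldots, b_m)$ and split according to whether the truncated part $(b_0,\ldots,b_{m-1})$ is the zero vector modulo $p$ or not.

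First, if $(b_0, \ldots, b_{m-1}) \not\equiv (0,\ldots,0) \pmod p$, then by Lemma~\ref{lem:LinTerm+Deg} the polynomial $\sum_{j=0}^{m} b_j F_j^{(n)}$, when one isolates the smallest index $s$ with $b_s \ne 0$, is linear in $X_s$ with a leading coefficient that is a nontrivial polynomial in $X_{s+1},\ldots,X_m$, so the very same computation used to pass from $U$ to its bound $A(N,p)$ in Theorem~\ref{thm:ExpSumAvV} applies verbatim (the role of $b_m$ is harmless since $F_m^{(n)}$ depends only on $X_m$). Hence in this case $V_{\vec{b},c}(M,N) \ll A(N,p)$, which is dominated by $B(N,t,p)$. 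So I would dispatch this case in one line by invoking the proof of Theorem~\ref{thm:ExpSumAvV}.

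The substantive case is $b_0 \equiv \cdots \equiv b_{m-1} \equiv 0 \pmod p$ and $\gcd(b_m,p)=1$. Here, since $F_m^{(n)}(w_0,\ldots,w_m)$ depends only on $w_m$ and equals $g_m^n w_m + \frac{g_m^n-1}{g_m-1}h$ (appropriately interpreted, using the common-$g_m$ hypothesis, and in the $g_m=1$ case a linear expression in $n$), the inner exponential sum factors: summing over $w_0,\ldots,w_{m-1}$ just produces a factor $p^m$, and one is left with
$$
V_{\vec{b},c}(M,N) = p^m \sum_{w_m \in \F_p} \left| \sum_{n=0}^{N-1} \ep\(b_m F_m^{(n)}(w_m)\) \eM(cn) \right|^2.
$$
Expanding the square and swapping the order of summation, the sum over $w_m$ of $\ep\((F_m^{(n_1)}(w_m) - F_m^{(n_2)}(w_m)) b_m\)$ is $p$ when $g_m^{n_1} \equiv g_m^{n_2}$, i.e. $n_1 \equiv n_2 \pmod t$ (and $h$ terms then match automatically), and is $0$ otherwise; in the $g_m=1$ case it is $p$ when $n_1 \equiv n_2 \pmod p$, i.e. exactly when $n_1 = n_2$ as $N \le p$ may fail — so more carefully one bounds the number of pairs with $n_1 \equiv n_2 \pmod t$ by $O(N^2/t + N)$. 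Either way one gets
$$
V_{\vec{b},c}(M,N) \ll p^{m+1} \(\frac{N^2}{t} + N\) \ll N^2 t^{-1} p^{m+1} + A(N,p),
$$
the last step absorbing the $N p^{m+1}$ term into $A(N,p)$. Combining the two cases yields $V_{\vec{b},c}(M,N) \ll A(N,p) + N^2 t^{-1} p^{m+1} = B(N,t,p)$.

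The only mildly delicate point — and the one I would write out carefully — is the bookkeeping in the degenerate last-coordinate case: making sure the common-$g_m$ hypothesis is genuinely used so that $F_m^{(n)}(w_m) - F_m^{(n')}(w_m)$ vanishes identically in $w_m$ precisely when $g_m^n = g_m^{n'}$ (the additive shifts $h_{k,m}$ can differ from system to system, so one must check they cancel once the geometric parts agree), and handling the $g_m=1$ boundary via the $+N$ term rather than a clean congruence count. Everything else is a direct reuse of the machinery already established for $U_{\vec{a},c}(M,N)$.
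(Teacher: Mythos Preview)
Your approach is essentially the same as the paper's: split on whether $(b_0,\ldots,b_{m-1})$ vanishes modulo $p$, dispatch the nondegenerate case by the argument for $U_{\vec{a},c}$, and in the degenerate case expand the square, sum over $w_m$, and count pairs $(n_1,n_2)$ with $n_1\equiv n_2\pmod t$. The final bound $p^{m+1}(N^2/t+N)$ with the $Np^{m+1}$ term absorbed into $A(N,p)$ is exactly what the paper obtains.

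One correction, though: your parenthetical claim that ``$h$ terms then match automatically'' and your closing worry that one must ``check they cancel once the geometric parts agree'' are both off. In general the constants $d_k=\sum_{i=1}^k g_m^{k-i}h_{i,m}$ do \emph{not} coincide when $k\equiv n\pmod t$, since the $h_{i,m}$ vary with $i$. What actually happens is that once $g_m^k=g_m^n$ the difference $F_m^{(k)}(w_m)-F_m^{(n)}(w_m)$ becomes the \emph{constant} $d_k-d_n$, so the sum over $w_m$ equals $p\cdot\ep(b_m(d_k-d_n))$, a quantity of absolute value $p$. Since you pass to absolute values anyway (as does the paper), this is all you need; the additive shifts never have to cancel. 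So the point you flagged as ``mildly delicate'' is in fact harmless, and your proof goes through once you replace ``is $p$'' by ``has absolute value $p$''.
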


\begin{proof}
 Note, as before, that if 
$\gcd(b_0, \ldots, b_{m-1} ,p) = 1$ then the proof 
of~\cite[Lemma~4]{Ost} applies to the sums  
$V_{\vec{b},c}(M,N)$ without any changes. So it remains to consider 
the case 
$$
b_0 \equiv \ldots  \equiv b_{m-1}  \equiv 0 \pmod p \mand 
\gcd(b_{m} ,p) = 1, 
$$
in which case we simply obtain 
\begin{eqnarray*}
\lefteqn{
V_{\vec{b},c}(M,N) =  \sum_{v_{0},\ldots, v_{m}\in \F_{p}}
\left|\sum_{n=0}^{N-1}
\ep \(b_m F_m^{(n)}(v_0,\ldots,v_m)\) \eM(c n)\right|^2}\\
&& \quad =  \sum_{k,n=0}^{N-1} \eM(c (k-n))\\
& & \qquad \qquad 
\sum_{{v_{0},\ldots, v_{m}\in \F_{p}}}\ep \(b_m\(F_m^{{(k)}}(v_0,\ldots,v_m) -
F_m^{{(n)}}(v_0,\ldots,v_m)\)\) \\
&& \quad\le \sum_{k,n=0}^{N-1} \left| \sum_{{v_{0},\ldots, v_{m}\in \F_{p}}} \ep \( b_m \(F_m^{{(k)}}(v_0,\ldots,v_m) -F_m^{{(n)}}(v_0,\ldots,v_m )\)\) \right|.
\end{eqnarray*}

We have the follwing explicit formulas (see also~\eqref{eq:um expl ne 1} 
and~\eqref{eq:um expl = 1}): 
$$
F_m^{(k)} = g_m^k X_m + d_m \qquad k =0, 1, \ldots, 
$$
if $g_m\ne 1$  and 
\begin{equation}
\label{eq:fm expl = 1}
F_m^{(k)} = X_m+\sum_{i=1}^k h_{i,m}, \qquad k =0, 1, \ldots, 
\end{equation}
if $g_m = 1$,
where 
$$
d_m = \sum_{i=1}^k g_m^{k-i} h_{i,m},
$$

We treat first the case $g_m\ne 1$. In this case we get:
\begin{eqnarray*}
\lefteqn{V_{\vec{b},c}(M,N) \le  \sum_{k,n=0}^{N-1} \left| \sum_{{v_{0},\ldots, v_{m}\in \F_{p}}} \ep \( b_m \((g_m^k-g_m^n) v_m+d_k-d_n\)\) \right| }\\
& & = \sum_{\substack{k,n=0\\k \equiv n\pmod t}}^{N-1} \left| \sum_{{v_{0},\ldots, v_{m}\in \F_{p}}} \ep \( b_m \((g_m^k-g_m^n) v_m+d_k-d_n\)\) \right| \\
& &   +~\sum_{\substack{k,n=0\\k\not \equiv n\pmod t}}^{N-1} \left| \sum_{{v_{0},\ldots, v_{m}\in \F_{p}}} 
\ep \( b_m \((g_m^k-g_m^n) v_m+d_k-d_n\)\) \right|.
\end{eqnarray*}
Because $g_m^k-g_m^n\equiv 0 \pmod p$ if and only if $k\equiv n \pmod t$, we estimate the first sum trivially as $N(N t^{-1}+1) p^{m+1}$. Furthermore, 
for $k\not \equiv  n \pmod t$, using~\eqref{eq:lin sum}  we see 
that the second sum simply vanishes.

Thus, for $g_m\ne 1$, we obtain
$$
V_{\vec{b},c}(M,N) \ll A(N,p)+N(N t^{-1}+1) p^{m+1} = A(N,p)+ N^2 t^{-1} p^{m+1}.
$$

For the case $g_m=1$ we recall~\eqref{eq:fm expl = 1} and using 
similar arguments easily derive the desired result.  
\end{proof}

As above, we now get:

\begin{cor}
\label{cor:DiscrAvv}
Let $0<\varepsilon<1$ and let the sequence $\{\vec{u}_n\}$ be given by~\eqref{eq:Gen}, where
$\cF_k \in \fF(S,m)$ is a sequence of 
permutation polynomial  systems~\eqref{eq:Poly Syst} 
satisfying also the additional condition that the last polynomial in all these systems has the same coefficient $g_m\in\Fp$ of $X_m$, that is,
$$F_{k,m}(X_0, \ldots ,X_m) = g_mX_m+h_{k,m}, \qquad k =1,2, \ldots.
$$  
Denote by $t$ the period of $g_m$ if $g_m \ne 1$ and put $t=p$ if $g_m= 1$.  
Then for all vectors of initial values $\vec{v}\in \Fp^{m+1}$  except at most
$O(\varepsilon p^{m+1})$, 
and any positive integer $N \le p^{m+1}$, 
the discrepancy $D_N(\vec{v})$ of the sequence
$$
\(\frac{u_{n,0}(\vec{v})}{p}, \ldots, \frac{u_{n,m}(\vec{v})}{p}\),
\qquad n = 0,\ldots, N-1,
$$
satisfies the bound 
$$
D_N(\vec{v})  \ll \varepsilon^{-1} D(N,t,p),
$$
where
$$
D(N,t,p) = C(N,p) \log N +  t^{-1/2} (\log N)^{m+2} \log p
$$
and $C(N,p)$ is defined as in Corollary~\ref{cor:DiscrAvu}. 
\end{cor}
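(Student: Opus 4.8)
The plan is to deduce this from Theorem~\ref{thm:ExpSumAvU} by the same mechanism that yields Corollary~\ref{cor:DiscrAvu} from Theorem~\ref{thm:ExpSumAvV}: the Erd\H os--Turan--Koksma inequality together with a second-moment average over the initial values and the standard completion device; the only new ingredient is the term $N^2t^{-1}p^{m+1}$ in $B(N,t,p)$, which will be responsible for the new summand $t^{-1/2}(\log N)^{m+2}\log p$ in $D(N,t,p)$. For a nonzero $\vec{b}\in\F_p^{m+1}$ and $\vec v\in\F_p^{m+1}$ put $S_{N'}(\vec v,\vec b)=\sum_{n=0}^{N'-1}\ep\(\sum_{j=0}^m b_ju_{n,j}(\vec v)\)$. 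First I would use the completion identity to express $S_{N'}(\vec v,\vec b)$, for all $N'\le N$ simultaneously, as a weighted average over $c$ of the complete weighted sums $\sum_{n=0}^{N-1}\ep\(\sum_j b_ju_{n,j}(\vec v)\)\eM(cn)$ with $M$ the least power of two that is at least $N$; since the Fourier coefficients of the indicator of $\{0,\ldots,N'-1\}$ have $L^1$-norm $\ll\log M\ll\log N$, squaring, summing over $\vec v\in\F_p^{m+1}$ and applying Theorem~\ref{thm:ExpSumAvU} for each $c$ gives
$$
\sum_{\vec v\in\F_p^{m+1}}\ \max_{N'\le N}|S_{N'}(\vec v,\vec b)|^2\ \ll\ (\log N)^2\,B(N,t,p),
$$
uniformly in $\vec b$.

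Next I would invoke Lemma~\ref{lem:Kok-Szu} in dimension $s=m+1$ with $\gamma_{n,j}=u_{n,j}(\vec v)/p$ and with $H=p$ (reducing $\vec b$ modulo $p$ changes $S_{N'}$ by nothing, and for very small $N'$ only the trivial bound $D_{N'}\le 1$ is needed). This bounds $D_{N'}(\vec v)$ by $O(p^{-1})$ plus $N^{-1}\sum_{0<|\vec b|\le p}\prod_{j=0}^m(|b_j|+1)^{-1}|S_{N'}(\vec v,\vec b)|$; applying Cauchy--Schwarz to the latter and using $\sum_{0<|\vec b|\le p}\prod_j(|b_j|+1)^{-1}\ll(\log p)^{m+1}$, it suffices to control $\Sigma(\vec v)=\sum_{0<|\vec b|\le p}\prod_j(|b_j|+1)^{-1}\max_{N'\le N}|S_{N'}(\vec v,\vec b)|^2$. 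Swapping the order of summation and inserting the previous display gives $\sum_{\vec v}\Sigma(\vec v)\ll(\log p)^{m+1}(\log N)^2B(N,t,p)$, so by Markov's inequality all but $O(\varepsilon p^{m+1})$ initial vectors $\vec v$ satisfy $\Sigma(\vec v)\ll\varepsilon^{-1}p^{-(m+1)}(\log p)^{m+1}(\log N)^2B(N,t,p)$; these are the ones we keep, and crucially the exceptional set is chosen once and serves all $\vec b$ and all $N'\le N$. For a kept $\vec v$, combining the Cauchy--Schwarz step with Lemma~\ref{lem:Kok-Szu} yields, uniformly in $N'\le N$,
$$
D_{N'}(\vec v)\ \ll\ \frac1p+\frac{(\log p)^{m+1}\log N}{\sqrt{\varepsilon}\,N}\(\frac{B(N,t,p)}{p^{m+1}}\)^{1/2}.
$$

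Finally I would substitute $B(N,t,p)=A(N,p)+N^2t^{-1}p^{m+1}$ and split according to whether $N\le p^{1/(m+1)}$ or $N>p^{1/(m+1)}$, exactly as in the definition of $A(N,p)$. The $A(N,p)$ part reproduces a term of the shape $\varepsilon^{-1}C(N,p)\log N$ (in the range $N>p^{1/(m+1)}$ one has $\sqrt{A(N,p)}/\(Np^{(m+1)/2}\)=p^{-1/(2(m+1))}$, in the range $N\le p^{1/(m+1)}$ it equals $N^{-1/2}$), while $\sqrt{N^2t^{-1}p^{m+1}}/\(Np^{(m+1)/2}\)=t^{-1/2}$ makes the second term contribute $\varepsilon^{-1}t^{-1/2}(\log N)^{m+2}\log p$ after absorbing $(\log p)^{m+1}\log N$ into the weaker factor $(\log N)^{m+2}\log p$ and $\varepsilon^{-1/2}$ into $\varepsilon^{-1}$. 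This gives the claimed bound $D_N(\vec v)\ll\varepsilon^{-1}D(N,t,p)$. The case $g_m=1$, where $t=p$, requires no separate treatment, since the explicit formula~\eqref{eq:fm expl = 1} is already incorporated into Theorem~\ref{thm:ExpSumAvU}.

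I do not expect a genuine obstacle here: the analytic substance lies entirely in Theorem~\ref{thm:ExpSumAvU}, and what remains is the routine ``Erd\H os--Turan--Koksma $+$ second moment $+$ completion'' packaging used in~\cite{Ost}. The two points that do require attention are (i) arranging a single exceptional set of size $O(\varepsilon p^{m+1})$ that works simultaneously for every admissible frequency $\vec b$ and every length $N'\le N$ --- obtained by summing $\max_{N'\le N}|S_{N'}(\vec v,\vec b)|^2$ against the Erd\H os--Turan weights \emph{before} applying Markov's inequality --- and (ii) the somewhat tedious tracking of the logarithmic factors and of the two ranges of $N$, which is precisely what pins down the form of $D(N,t,p)$.
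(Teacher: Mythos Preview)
Your strategy is exactly the paper's: it proves Corollary~\ref{cor:DiscrAvv} with the single line ``As above, we now get'', meaning the Erd\H os--Tur\'an--Koksma $+$ completion $+$ second-moment--Markov packaging of~\cite{Ost}, applied now to $V_{\vec b,c}(M,N)$ via Theorem~\ref{thm:ExpSumAvU}; your identification of $\sqrt{N^2t^{-1}p^{m+1}}/(Np^{(m+1)/2})=t^{-1/2}$ as the source of the new term is precisely the point.

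There is one small slip in the execution. With $H=p$ in Lemma~\ref{lem:Kok-Szu} the weight sum contributes $(\log p)^{m+1}$, and the absorption you claim, $(\log p)^{m+1}\log N \ll (\log N)^{m+2}\log p$, is false when $N$ is small compared with $p$ (and the trivial bound $D_N\le 1$ does not rescue you in the whole range where this fails). The log pattern in $C(N,p)$ --- namely $(\log N)^{m+1}\log p$ in dimension $m$, hence $(\log N)^{m+2}\log p$ in dimension $m+1$ --- tells you what the argument in~\cite{Ost} actually does: take $H$ comparable to $N$ (so the weight sum is $(\log N)^{m+1}$ and the $1/H$ term is harmless), and obtain a single exceptional set valid for \emph{all} $N\le p^{m+1}$ by running Markov at each dyadic level $N=2^k$ with budget $\varepsilon/\log p$ and taking the union, which is where the lone factor $\log p$ comes from. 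With that adjustment your sketch reproduces $D(N,t,p)=C(N,p)\log N + t^{-1/2}(\log N)^{m+2}\log p$ exactly.
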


It is easy to see that under the condition~\eqref{eq:t sqrt} 
the quantities  $B(N,t,p)$ and $D(N,t,p)$ are dominated 
by the terms with $A(N,p)$ and $C(N,p)$, respectively:
$$
B(N,t,p)\ll A(N,p) \mand D(N,t,p) \ll C(N,p) \log N.
$$

Finally, we remark that analogues of Theorem~\ref{thm:ExpSumAvU}
and  Corollary~\ref{cor:DiscrAvv} 
can be proven also for more general permutation polynomial systems, namely for systems in which the coefficients $g_{j,m}$ of $X_m$ in the last polynomial of each system vary
in such a way  that  
\begin{equation}
\label{eq:products}
\prod_{j = 1}^k g_{j,m} \not \equiv \prod_{j = 1}^n g_{j,m} \pmod p
\end{equation}
is  $k$ and $n$ are close to each. In fact, if this is guaranteed for 
$k$ and $n$ with $0 < |k-n|<t$ then the corresponding results for such 
polynomial systems look identical to those of Theorem~\ref{thm:ExpSumAvU}
and  Corollary~\ref{cor:DiscrAvv}. 
For examples included such  sequences of coefficient 
as  $g_{j,m} = g_m^j$ for some element $g_m\in\Fp^*$. In this case,
the condition~\eqref{eq:products} is equivalent to the quadratic congruence
$$
k(k+1) \equiv n(n+1) \pmod {2t},
$$
where $t$ is the order of $g_m$
which can be easily shown not to have too many solutions with
$0 \le k,n \le N-1$ (in particular, if $t$ is prime the results are 
again exactly the same as those of Theorem~\ref{thm:ExpSumAvU}
and  Corollary~\ref{cor:DiscrAvv}). 

\section{Hash Functions from Polynomial Iterations}

\subsection{General Construction} In this section we propose a new construction of hash functions based on iterations of polynomial systems studied in the previous sections. This construction is motivated 
by that of D.~X.~Charles, E.~Z.~Goren and
K.~E.~Lauter~\cite{CGL} and in some sense it may be considered as 
its extension. 

Let $n$ and $r$ be two nonzero integers. Choose a random $n$-bit prime $p$ and $2^r$ permutation polynomial systems $\cF_\ell$, $\ell = 0, \ldots, 2^r-1$,
not necessary distinct,
defined by~\eqref{eq:Poly Syst} and~\eqref{eq:Poly Rec}.

We also consider a random initial vector $\vec{w}_0 \in \F_p^{m+1}$.

As in~\cite{CGL}, the input of the hash function is used to decide what polynomial system $\cF_\ell$ is used to iterate. More precisely,
it works as follows given an   input bit string $\Sigma$, we
execute the following steps:

\begin{itemize}
\item   pad $\Sigma$ with at most $r-1$ zeros on the left to make
sure that its length $L$ is a multiple of $r$;

\item split $\Sigma$   into blocks $\sigma_j$, $j =1, \ldots,J$, 
where $J = L/r$, of length $r$ and interpret each block
as an integer $\ell \in [0, 2^r-1]$.

\item Starting at the vector $\vec{w}_0$, apply the polynomial systems
$\cF_\ell$ iteratively obtaining the sequence of vectors $\vec{w}_j \in \F_p^{m+1}$.

\item Output $\vec{w}_J$ as the value of the hash function
(which can also be now interpreted as  a binary $(m+1)n$-bit string).
\end{itemize}

The above construction is quite similar to that of~\cite{CGL}
where $m=1$, the vectors $\vec{w}_j$ represent
 the coefficients of an equation describing an elliptic curve
for example, of the Weierstrass equation
$$
Y^2 = X^3 +sX + r
$$
and polynomials maps are associated with isogenies of
a fixed degree.

\subsection{Collision Resistance}

Our belief in collision resistance is essentially based on 
the same arguments as in~\cite{CGL}.

We remark  that  the initial vector $\vec{w}_0$ 
is fixed and in particular, does not depend on the input of the hash function. Furthermore, the collision resistance does not 
rely on the difficulty of 
inverting the maps generated by the polynomial systems $\cF_\ell$,
which are triangular and actually quite easy  to invert. 
Rather, it is based on the difficulty of making the decision which
system to apply at each step when one attempts to back trace 
from a given output to the initial vector  $\vec{w}_0$ and thus
produce two distinct strings $\Sigma_1$ and
$\Sigma_2$ of the same length $L$,  with the same output. 

Note that for strings of different lengths, say of $L$ and $L+1$,  a collision can easily be created. It is enough to take 
$\Sigma_2 = (0, \Sigma_1)$ (that is, $\Sigma_2$ is obtained
from $\Sigma_1$ by augmenting it by $0$). If $L \not \equiv 0 \pmod r$
then they lead to the same output. 
Certainly any  practical implementation has to take care of 
things like this.

We also note that the results of Section~\ref{eq:PerSys}
suggest that the above hash functions exhibit rather 
chaotic behaviour, which is close to the behaviour of
a random function.
We certainly make no claims about the 
cryptographic strength of our construction but we believe that 
there are enough reasons to investigate it (theoretically 
and experimentally)  more closely. 

\section{Remarks}

In the proof of Lemma~\ref{lem:Elem} we use
the estimate $O\(\deg \Phi_{s,\vec{k},\vec{l}} p^{m-s-1}\)$
on the number of zeros of  
the polynomial  $\Phi_{s,\vec{k},\vec{l}}$.
Perhaps this bound is hard to improve in general, 
but maybe this can be done for some specially 
selected polynomial systems. For example, if one can show
that  $\Phi_{s,\vec{k},\vec{l}}$ is absolutely 
irreducible then the Lang-Weil bound on the number of zeros of a polynomial in $m\ge2$ variables, see~\cite{LaWe,Sch},
can be used to derive a better result. Even the case of 
$\nu =1$ is already of interest. 

Furthermore, although low discrepancy is a very important 
requirement on any  pseudoramdom number generator, this is
not the only one. For example, the notion of 
{\it linear complexity\/} 
also plays an important role in this area, see~\cite{TopWin}. 
In the case of vector sequences it is natural to
consider linear relations with vector 
coefficients. 
Namely, we denote by $L(N)$ the smallest $L$
such that for some  $m$-dimensional 
vectors $\vec{c}_0, \ldots, \vec{c}_L$ 
over $\F_q$ where $\vec{c}_L$ is a non-zero 
vector,   we have
\begin{equation}
\label{eq:lincomp}
\sum_{h=0}^L \vec{c}_h \cdot \vec{u}_{n+h} = 0
\end{equation}
for all $h =0, \ldots, N-L-1$, where 
$\vec{c} \cdot \vec{u}$ denotes the scalar product.  
Using the same degree argument which is used in the 
proof of  Lemma~\ref{lem:Elem},  
we see that~\eqref{eq:lincomp} 
leads to a nontrivial polynomial equation 
in $m+1$ variables over $\F_p$ of degree $O(L^m)$.
Since for $N \le \tau$,  where as $\tau$ is the period of
the purely periodic sequence $\{\vec{w}_n\}$, the vectors 
$\vec{w}_{n+h}$, $h =0, \ldots, N-L-1$ are pairwise distinct,
this yields the estimate
$$
L(N) \gg N^{1/m} p^{-1}, \qquad 0 \le N \le \tau.
$$
This can be extended to sequences over arbitrary finite fields. 
Several more estimates of this type have recently been
given in~\cite{OstShpWing}. 
It would be very interesting to get 
better bounds which rely on a 
more refined analysis of~\eqref{eq:lincomp}. 
 
\section*{Acknowledgement}

The authors are grateful to the Fields Institute 
for its support and stimulating atmosphere which 
led to the initiation of this work at 
the ``Fields Cryptography Retrospective Meeting''
Toronto, 2009. 

During the preparation of this paper,  
A.~O. was supported in part by 
the Swiss National Science Foundation   Grant~121874  
and I.~S. by
the  Australian Research Council 
Grant~DP0556431.


\begin{thebibliography}{99}

\bibitem{BGGS1} S. R. Blackburn, D. Gomez-Perez, J. Gutierrez and
I. E. Shparlinski, `Predicting the inversive generator',
{\it  Lect. Notes in Comp. Sci.\/}, 
Springer-Verlag, Berlin, {\bf 2898} (2003), 264--275.

\bibitem{BGGS2}  S. R. Blackburn,  D.  Gomez-Perez, J. Gutierrez and
I. E. Shparlinski, `Predicting nonlinear pseudorandom number generators', 
{\it Math. Comp.\/},  {\bf 74} (2005), 1471--1494.

\bibitem{Bourg1} J. Bourgain, 
`Exponential sum estimates on subgroups of $\Z_q$,
$q$ arbitrary', {\it  J. Anal. Math.\/}, 
{\bf 97} (2005), 317--355.

\bibitem{Bourg2} J. Bourgain, `Exponential sum estimates in finite commutative rings and applications', {\it  J. Anal. Math.\/}, 
{\bf  101} (2007), 325--355. 

\bibitem{BouGar} J. Bourgain  and M. Z. Garaev,
`On a variant of sum-product estimates   and explicit exponential
sum bounds in prime fields', 
{\it Math. Proc. Cambridge Phil. Soc.\/}, {\bf 146} (2009), 1--21.

\bibitem{BGK} J. Bourgain,  A. A. Glibichuk and S. V. Konyagin,
`Estimates for the number of sums and products and for exponential
sums in fields of prime order', {\it J. Lond. Math. Soc.\/}, {\bf
73} (2006), 380--398.

\bibitem{CGL}  D. X. Charles, E. Z.  Goren and 
K. E. Lauter, `Cryptographic hash functions from expander graphs',  
{\it J. Cryptology\/}, {\bf 22}  (2009), 93--113.

\bibitem{ContShp}     S. Contini and I. E. Shparlinski,
`On Stern's attack against secret
truncated linear congruential generators',
{\it Lect. Notes in Comp. Sci.\/}, Springer-Verlag,  
Berlin, {\bf 3574} (2005), 52--60.

\bibitem{DrTi} M. Drmota and R. Tichy, 
{\it Sequences, discrepancies and applications\/},
Springer-Verlag, Berlin, 1997.

\bibitem{ErdMur} P. Erd{\H o}s and R. Murty, `On the order of $a \pmod p$',
{\it Proc. 5th Canadian Number Theory Association Conf.\/}, 
Amer. Math. Soc.,  Providence, RI,
1999, 87--97.


\bibitem{EvWa} G. R. Everest  and  T. Ward,
{\it Heights of polynomials and entropy in algebraic
dynamics\/}, Springer-Verlag,  London, 1999. 
  

\bibitem{FomZel} 
S. Fomin and A. Zelevinsky, `The Laurent phenomenon',
{\it Adv. in Appl. Math.\/}, {\bf 28} (2002), 119--144.

\bibitem{FHKLS}
A. M. Frieze, J. H{\aa}stad, R. Kannan, J. C. Lagarias and A.
Shamir, `Reconstructing truncated integer variables
satisfying linear congruences', {\it SIAM J. Comp.\/}, {\bf 17} (1988),
 262--280.
  
  
\bibitem{GoGuIb}   D.  Gomez-Perez, J. Gutierrez and
{\'A}. Ibeas,  `Attacking the Pollard generator',
{\it  IEEE Trans.
Inform. Theory\/}, {\bf 52} (2006), 5518--5523.


\bibitem{GNS} F.  Griffin, H. Niederreiter and  I.~E.~Shparlinski,
`On the  distribution
of nonlinear recursive congruential pseudorandom  numbers of higher orders',
{\it Lect. Notes in Comp. Sci.\/}, Springer-Verlag,  
Berlin, {\bf 1719} (1999), 87--93.

\bibitem{GG} J. Gutierrez and D. Gomez-Perez,
`Iterations of multivariate polynomials and discrepancy of pseudorandom numbers',
{\it Lect. Notes in Comp. Sci.\/}, Springer-Verlag,  
Berlin, {\bf 2227} (2001),  192--199.

\bibitem{GuIb}   J. Gutierrez and
{\'A}. Ibeas,  `Inferring sequences produced by a linear congruential
generator on elliptic curves missing high-order bits',
{\it Designs, Codes and Cryptography\/}, 
{\bf 41} (2007), 199--212.



\bibitem{IndlTim} K.-H. Indlekofer and N. M. Timofeev, 
`Divisors of shifted primes',
{\it Publ. Math. Debrecen\/},  {\bf 60} (2002), 307--345.


\bibitem{JMV} D. Jao, S. D. Miller and R. Venkatesan, 
`Expander graphs based on GRH with an application to elliptic curve cryptography', 
{\it J. Number Theory.\/}, {\bf 129} (2009), 1491--1504. 

\bibitem{Jon} R. Jones, `The density of prime divisors in 
the arithmetic dynamics of quadratic polynomials',
{\it J. Lond. Math. Soc.\/}, 
{\bf  78} (2008), 523--544.

\bibitem{JS} A. Joux and J. Stern, `Lattice reduction: 
A toolbox for the cryptanalyst',
{\it J. Cryptology\/}, {\bf 11} (1998), 161--185.


\bibitem{HBK}
D. R. Heath-Brown and S. V. Konyagin, `New bounds for Gauss sums
derived from $k$th powers,
and for Heilbronn's exponential sum',
{\it Ouart. J. Math.\/}, {\bf 51} (2000), 221--235.


\bibitem{Kon} S. V. Konyagin, 
`On estimates of Gaussian sums and the Waring
problem modulo a prime',  
{\it Trudy Matem. Inst. Acad. Nauk USSR\/}, Moscow,
{\bf 198} (1992),  111--124 (in  Russian).

\bibitem{KoSh} S. V. Konyagin and  I. E. Shparlinski,
{\it Character sums with  exponential functions and
their applications\/}, Cambridge Univ. Press, Cambridge, 1999.


\bibitem{Kraw} H. Krawczyk, `How to predict congruential generators',
{\it  J. Algorithms\/}, {\bf 13} (1992), 527--545.


\bibitem{KuNi}
L.~Kuipers and H.~Niederreiter, {\it Uniform distribution of
sequences\/}, Wiley-Intersci., New York-London-Sydney, 1974.


\bibitem{Lag} J. C. Lagarias, `Pseudorandom number 
generators in cryptography and number
theory', {\it Proc. Symp. in Appl. Math.\/}, 
Amer. Math. Soc., Providence, RI,
{\bf 42} (1990), 115--143.

\bibitem{LaWe} S. Lang and A. Weil, 
`Number of points of varieties in finite fields',
{\it  Amer. J. Math.\/}, {\bf 76} (1954), 819--827.

\bibitem{LN} R. Lidl and H. Niederreiter, 
`On orthogonal systems and permutation polynomials in several variables',
{\it  Acta Arith.\/}, {\bf 22} (1973), 257--265.

\bibitem{LN1} R. Lidl and H. Niederreiter, {\it Finite fields\/},
Cambridge University Press, Cambridge, 1997.


\bibitem{Nied1} H. Niederreiter, `Quasi-Monte Carlo methods and pseudo-random
numbers', {\it Bull. Amer. Math. Soc.\/}, {\bf 84} (1978), 957--1041.

\bibitem{Nied2}
H. Niederreiter, {\it Random number
generation and Quasi--Monte Carlo methods\/},  SIAM Press, 1992.


\bibitem{NiSh1} H. Niederreiter and I. E. Shparlinski,
`On the distribution and lattice structure of nonlinear congruential pseudorandom
numbers', {\it Finite Fields and Their Appl.\/}, {\bf 5} (1999), 246--253.

\bibitem{NiSh2} H. Niederreiter and I. E. Shparlinski,
`Dynamical systems generated by rational functions',
{\it Lect. Notes in Comp. Sci.\/}, Springer-Verlag,  
Berlin, {\bf 2643} (2003), 6--17.


\bibitem{NiWi}
H. Niederreiter and A. Winterhof,  
`Exponential sums for nonlinear recurring sequences', 
{\it Finite Fields Appl.\/}, {\bf 14} (2008),   59--64. 

\bibitem{Ost}
A. Ostafe, `Multivariate permutation polynomial systems and 
nonlinear pseudorandom number generators' {\it Finite Fields 
and Applications\/}, (to appear).


\bibitem{OstShp}
A. Ostafe and I.~E.~Shparlinski,  
`On the degree growth in some polynomial dynamical
systems and nonlinear pseudorandom number generators',
{\it Math. Comp.\/}, {\bf 79}  (2010), 501--511. 

\bibitem{OstShpWing}
A. Ostafe, I.~E.~Shparlinski  and A.~Winterhof,  
`On the generalized joint linear complexity profile of 
a class of nonlinear pseudorandom multisequences',
 {\it Preprint\/}, 2009.

\bibitem{Papp}
F. Pappalardi, `On the order of finitely generated
subgroups  of \linebreak $\Q^* \pmod p$ and divisors of $p-1$', {\it
J. Number Theory\/}, {\bf  57} (1996), 207--222.

\bibitem{Sch}
W. M. Schmidt,  
`A lower bound for the number of solutions of equations over finite fields', 
{\it J. Number Theory\/}, {\bf 6} (1974),   448--480. 

\bibitem{Shp} I.~E.~Shparlinski, 
`On some dynamical systems in  finite fields 
and residue rings',
{\it Discr. and Cont. Dynam. Syst., Ser.A\/},  {\bf 17} (2007), 901--917.


\bibitem{Silv0}
J. H. Silverman, {\it The arithmetic of elliptic
curves\/}, Springer-Verlag, Berlin, 1995.

\bibitem{Silv1}
J. H. Silverman, {\it The arithmetic of dynamical systems\/},
Springer, New York, 2007.

\bibitem{Silv2}
J. H. Silverman, `Variation of periods modulo $p$ in arithmetic dynamics',
{\it New York J. Math.\/},  {\bf 14}  (2008), 601--616. 

\bibitem{TopWin} A. Topuzo{\v g}lu and
A. Winterhof, `Pseudorandom  sequences', {\it Topics in Geometry,
Coding Theory and Cryptography\/}, Springer-Verlag, 2006, 135--166.

\end{thebibliography}
\end{document}